\newtheorem{myproposition}{Proposition}[section]
\newtheorem{mytheorem}[myproposition]{Theorem}
\newtheorem{mylemma}[myproposition]{Lemma}
\newtheorem{myconjecture}[myproposition]{Conjecture}
\newtheorem{mycorollary}[myproposition]{Corollary}
\newtheorem{myobservation}[myproposition]{Observation}
\def\gr{\mathcal{G}}
\def\zet{\mathbb{Z}}
\author{Sylwia Cichacz\thanks{This work was partially supported by the Faculty of Applied Mathematics AGH UST statutory tasks within subsidy of Ministry of Science and Higher Education.}
  \and Jakub Przyby{\l}o\thanks{Supported by the National Science Centre, Poland, grant no. 2014/13/B/ST1/01855.}}
\title[Group twin edge coloring of graphs]{Group twin edge coloring of graphs}
\affiliation{
 AGH University of Science and Technology}
\keywords{Abelian group, twin edge coloring}
\begin{document}
\publicationdetails{20}{2018}{1}{24}{3848}
\maketitle
\begin{abstract}
  For a given graph $G$, 
the least integer $k\geq 2$ 
such that for every Abelian group $\gr$ of order $k$ there exists a proper edge labeling $f:E(G)\rightarrow \gr$ so that $\sum_{x\in N(u)}f(xu)\neq \sum_{x\in N(v)}f(xv)$ for each edge $uv\in E(G)$ is called
the \textit{group twin chromatic index} of $G$ and denoted by $\chi'_g(G)$.
This graph invariant is related to a few well-known problems in the field of neighbor distinguishing graph colorings. 
We conjecture that $\chi'_g(G)\leq \Delta(G)+3$ for all graphs without isolated edges, where $\Delta(G)$ is the maximum degree of $G$,
and provide an infinite family of connected graph (trees) for which the equality holds. 
We prove that this conjecture is valid 
for all trees, and then apply this result as the base case for proving a general upper bound for all graphs $G$ without isolated edges:  $\chi'_g(G)\leq 2(\Delta(G)+{\rm col}(G))-5$, where ${\rm col}(G)$ denotes the coloring number of $G$.
This improves the best known upper bound known previously only for the case of cyclic groups $\zet_k$.
\end{abstract}
\section{Introduction}
\label{sec:in}

It  is a well-known fact that in any simple graph $G$
there are at least two vertices of the same degree. The situation changes if we consider an edge labeling $f:E(G)\rightarrow \{1,\ldots,s\}$ and calculate the \emph{weighted degree} 
of each vertex $v$ as the sum of labels of all the edges incident with $v$. The labeling $f$ is called \textit{irregular} if the weighted degrees of all the vertices in $G$ are distinct. The least value of $s$ that allows some irregular labeling is called the \textit{irregularity strength of $G$} and denoted by $s(G)$.

The problem of finding $s(G)$ was introduced by Chartrand et al. in \cite{ref_ChaJacLehOelRuiSab1} and investigated by numerous authors  \cite{
ref_AmaTog,Lazebnik,Faudree2,
Frieze,ref_Leh}. 
A tight upper bound $s(G)\leq n-1$, where $n$ is the order of $G$, was proved for all graphs containing no isolated edges and at most
one isolated vertex, except for the graph $K_3$ \cite{ref_AigTri,Nierhoff}.
This was improved for graphs with sufficiently large minimum degree $\delta$ by Kalkowski, Karo\'nski and Pfender \cite{ref_KalKarPfe1}, who proved that $s(G)\leq \lceil 6n/\delta\rceil$, and for graphs with $\delta\geq n^{1/2}\ln n$ 
by Majerski and Przyby{\l}o in \cite{ref_Prz3}, implying that $s(G)\leq (4+o(1))n/\delta+4$ then.

A labeling of the edges of a graph $G$ is called \textit{vertex coloring} if it results in weighted degrees that properly color the vertices (i.e., weighted degrees are required to be distinct only for adjacent vertices). If we use the elements of $\{1,2,\dots,k\}$ to label the edges, such a labeling is called a \textit{vertex coloring $k$-edge labeling}.

The concept of coloring the vertices with the sums of edge labels was introduced for the first time by Karoński, Łuczak and Thomason \cite{ref_KarLucTho}. The authors posed the following question. Given a graph $G$ without isolated edges, 
what is the minimum $k$ such that there exists a vertex coloring $k$-edge labeling? 
We will call this minimum value of $k$ the \textit{sum chromatic number} and denote it by $\chi^\Sigma(G)$. Karoński, Łuczak and Thomason conjectured that $\chi^\Sigma(G)\leq 3$ for every graph $G$ with no isolated edges. 
The first constant bound was proved by Addario-Berry et al. in \cite{ref_AddDalMcDReeTho} ($\chi^\Sigma(G)\leq 30$) and then improved by Addario-Berry et al. in \cite{ref_AddDalRee} ($\chi^\Sigma(G)\leq 16$), Wan and Yu in \cite{ref_WanYu} ($\chi^\Sigma(G)\leq 13$) and finally by Kalkowski, Karo\'nski and Pfender in \cite{ref_KalKarPfe2} ($\chi^\Sigma(G)\leq 5$). Recently Thomassen, Wu and Zhang considered the modulo version of this problem \cite{ref_ThoWuZha}. Specifically, they proved that a non-bipartite $(6k-7)$-edge-connected graph of chromatic number at most $k$ admits a weighting of the edges with labels $1, 2$ such that the resulting weighted degrees reduced modulo $k$ yield a proper vertex coloring of the vertices. A variation of the sum chromatic number
with labels from any Abelian group
is called the \textit{group  sum  chromatic  number} and was studied in \cite{ref_AnhCic1}; more precisely it is the least integer $s$ such that
for 
any Abelian group $\gr$ of order $s$, there exists a function $f\colon E(G)\rightarrow \gr$
which induce a proper coloring of the vertices by their corresponding sums of incident labels.
Such problem was in fact first considered in~\cite{ref_KarLucTho}.


Inspired  by the graph colorings described above,
Andrews et al. \cite{ref_AndHekJohVerPin} turned towards proper edge labelings (with distinct labels on adjacent edges) with the elements of a given $\zet_k$. By a \textit{twin edge coloring} of a graph $G$ (without isolated edges) they denoted a proper edge labeling
$f \colon E(G) \rightarrow \zet_k$ for some $k\geq 2$ such that the induced vertex coloring $w \colon  V (G) \rightarrow \zet_k$ defined by
$w(v) = \sum_{u\in N(v)}f(uv)\pmod k$ is proper. The least integer $k$ admitting such an edge labeling is called the \emph{twin chromatic index} of $G$ and denoted by $\chi'_t(G)$. Note that since $f$ constitutes a proper edge coloring, then $\Delta(G)\leq \chi'(G)\leq \chi'_t(G)$.

Andrews et al. showed that if $G$ is a connected graph of order at least $3$ and size $m$, then $\chi'_t(G)\leq 2^{m-1}$. They also stated the following conjecture and verified it for some classes of graphs:
\begin{myconjecture}[\cite{ref_AndHekJohVerPin}]
If $G$ is a connected graph of order at least $3$ that is not a $5$-cycle, then $\chi'_t(G)\leq \Delta(G)+2$.
\end{myconjecture}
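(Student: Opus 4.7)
The plan is to proceed by structural induction, first establishing the conjecture for trees and then bootstrapping to general graphs via a vertex-deletion argument based on a degeneracy or coloring-number ordering. For trees, I would root $T$ at a non-leaf vertex $r$ and label the edges in reverse BFS order, from the deepest leaves upward. When processing an edge $uv$ with $u$ the parent of $v$, the constraints on $f(uv)\in\zet_{\Delta(T)+2}$ are: (i) it must differ from the labels already assigned to edges incident with $u$ and with $v$ (proper edge coloring), and (ii) the resulting weighted degree at $v$, which is finalized at this step, must differ from those of $v$'s children already processed. A careful counting shows that the $\deg(u)-1$ and $\deg(v)-1$ constraints of type (i) usually overlap enough to leave room, and a standard trick is to keep one ``active'' edge adjacent to the current vertex in reserve to absorb any last-minute weight conflict imposed by (ii).

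For general graphs, with the tree case available I would induct on $|V(G)|$, selecting a vertex $v$ of degree at most $\mathrm{col}(G)-1$ according to a greedy ordering that realizes the coloring number. After inductively labeling $G-v$, I would extend the labeling to the edges at $v$ one by one. At each such edge, the labels forbidden by properness plus the single constraint coming from $v$'s eventual weight (which must avoid the weights of its already-labeled neighbors) leave at least $(\Delta(G)+2)-(\Delta(G)-1)-(\deg(v)-1)\geq 2$ admissible choices when the bookkeeping is aligned, and as in the tree case the last unlabeled edge at $v$ acts as the degree of freedom that places $v$'s weight correctly.

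The principal obstacle, reflected in the fact that the authors themselves only attain the weaker bound $\Delta(G)+3$ in the group setting for trees, is the extreme tightness of $\Delta+2$. In tight regimes such as $\Delta$-regular graphs, the number of labels that remain admissible for a single edge after imposing both the properness and the weight constraints can drop to zero, forcing local re-labelings that may propagate globally. A plausible remedy is a Kalkowski--Karo\'nski--Pfender style multi-pass correction: prelabel arbitrarily, then sweep through the vertices adjusting labels within a small ``buffer'' so as to shift their weighted degrees into distinct residue classes modulo $\Delta+2$, and finally verify that no proper-edge-coloring conflict has been introduced. Making this work with only $\Delta+2$ labels, rather than the $\Delta+O(1)$ with a larger absolute constant that such arguments typically yield, is the crux of the difficulty and the reason the conjecture remains open in full generality.
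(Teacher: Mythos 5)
The statement you are asked to prove is not a theorem of this paper at all: it is an open conjecture of Andrews et al., which the paper merely restates (and only verifies indirectly for limited classes; the paper's own general bound is $\chi'_t(G)\leq 4\Delta(G)-3$ via Johnston, improved here to $\chi'_g(G)\leq 2(\Delta(G)+\mathrm{col}(G))-5$, and for forests only $\Delta(G)+2$ when $\Delta(G)$ is odd and $\Delta(G)+3$ otherwise). Your text is a plan rather than a proof, and you concede as much in your final paragraph, so there is nothing here that could be checked against a proof in the paper.

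Beyond that, the sketch has concrete failures. In the general-graph step your count
$(\Delta(G)+2)-(\Delta(G)-1)-(\deg(v)-1)=4-\deg(v)$
is at least $2$ only when $\deg(v)\leq 2$, so the greedy extension already breaks at any vertex of degree $3$; moreover properness at an edge $uv$ forbids up to $(\deg(u)-1)+(\deg(v)-1)\leq 2\Delta(G)-2$ labels, not $\Delta(G)-1$, which is why the paper needs roughly $2\Delta(G)+2\,\mathrm{col}(G)$ labels rather than $\Delta(G)+2$. Even your base case is unavailable: the $\Delta+2$ bound for trees is not known for all trees (Theorem~\ref{drzewa} covers only $\Delta\leq 6$, and the paper's Theorem~\ref{ConcludingMytheorem} gives $\Delta+3$ for even maximum degree), and the arithmetic obstructions in $\zet_{\Delta+2}$ when $\Delta+2$ is even (the involution, and the sum of all group elements being nonzero) are exactly what a reverse-BFS greedy labeling with one ``reserve'' edge cannot absorb at high-degree vertices, as the paper's analysis of $(2^p-3)$-regular stars and Observation~\ref{sharp} illustrate in the group setting. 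The Kalkowski--Karo\'nski--Pfender multi-pass idea you invoke as a remedy is designed for non-proper labelings with a constant number of labels; no adaptation compatible with maintaining a proper edge coloring within $\Delta+2$ labels is known, which is precisely why the conjecture is open.
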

This was a strengthening of a former conjecture of Flandrin et al.~\cite{FlandrinMPSW} with the same thesis but concerning a protoplast of $\chi'_t$ where instead of calculating appropriate sums modulo $k$, we simply compute these in $\zet$. See \cite{BonamyPrzybylo,DongWang_mad,Przybylo_CN_1,Przybylo_CN_2,WangChenWang_planar} for other results concerning this graph invariant.

In \cite{ref_AndJohPin}, Andrews et al. estimated the twin chromatic index for some classes of graphs; in particular they proved the following theorem for trees with
small maximum degree.
\begin{mytheorem}[\cite{ref_AndJohPin}]
If $T$ is a  tree of order at least $3$ and $\Delta(T)\leq6$, then $T$ has a twin edge $(\Delta(T)+2)$-coloring. Moreover if $T$ is a path of order $n \geq 3$, then $\chi'_t(T)= 3$. \label{drzewa}
\end{mytheorem}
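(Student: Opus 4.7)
The plan is to induct on $|V(T)|$, dispatching paths separately and using the two extra colors beyond $\Delta(T)$ as slack in a top-down BFS extension. For paths, a suitable cyclic labeling of the edges by residues of $\zet_3$ (with a small adjustment near the endpoints) gives $\chi'_t(P_n)\le 3$: the interior weighted sums can be made to cycle through all three residues of $\zet_3$, so adjacent ones always differ, and the endpoints are separated from their neighbours by a single-edge sum. Conversely $\chi'_t(P_n)\ge 3$ because any proper $\zet_2$-labeling must alternate $0,1$ on the edges, forcing two consecutive interior weighted sums to equal $1$.

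For a tree $T$ with $\Delta(T)\le 6$ I set $k=\Delta(T)+2$, root $T$ at a leaf, and extend the labeling in BFS order. At a non-root vertex $v$ whose parent edge $vp$ already carries the label $b\in\zet_k$ and for which the parent weighted sum $S_p$ is already determined, I must pick distinct labels $a_1,\dots,a_t\in\zet_k\setminus\{b\}$ on the $t=\deg(v)-1$ edges to the children $c_1,\dots,c_t$ so that (i) $b+\sum_i a_i\ne S_p$ (to separate $v$ from $p$), and (ii) for each leaf child $c_j$, $\sum_{i\ne j}a_i\ne -b$ (to separate $c_j$ from $v$). No commitment is needed now for a non-leaf child $c_j$: when I later process it, at least one of its downward edges provides a free label that can be tuned to ensure $S_{c_j}\ne S_v$.

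The heart of the argument is the local feasibility at $v$. I fix $a_1,\dots,a_{t-1}$ greedily (distinct and different from $b$) and reserve $a_t$ as a slack variable. The forbidden values of $a_t$ are $\{b,a_1,\dots,a_{t-1}\}$, the single value imposed by (i), and one value per leaf child $c_j$ with $j\ne t$ from (ii); in total at most $t+1+|L_v\setminus\{c_t\}|$ forbidden residues out of $k$, where $L_v$ denotes the set of leaf children of $v$. Since $t\le k-3$, a direct count leaves a valid choice as soon as $k>2t$, which already settles $\Delta(T)\le 3$. For $\Delta(T)\in\{4,5,6\}$ one additionally exploits the freedom to permute the first $t-1$ labels---swapping two of them merely shuffles the forbidden sums in (ii) while preserving the sum in (i)---and a short case analysis on the coincidences among the resulting sums disposes of the remaining configurations.

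The main obstacle is precisely this local step in the extremal regime where $\deg(v)=\Delta(T)=6$ and nearly all of $v$'s children are leaves: conditions (i) and (ii) together almost exhaust the set of $t$-tuples of distinct labels in $(\zet_k\setminus\{b\})^t$. The key to closing the argument is to verify that the $1+|L_v|$ affine hyperplanes cannot simultaneously cover every such tuple, and this is precisely the point at which the hypothesis $\Delta(T)\le 6$ is exploited; past this range one would need either a stronger combinatorial tool such as the Combinatorial Nullstellensatz or a different inductive scheme altogether.
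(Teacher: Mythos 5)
First, note that the paper does not actually prove this statement: Theorem~\ref{drzewa} is quoted from Andrews et al.~\cite{ref_AndJohPin} and used as a black box, so there is no in-paper proof to compare against. Judged on its own merits, your proposal has a genuine gap. The path part is essentially fine (except that for $n=3$ there are no ``two consecutive interior weighted sums''; the $\zet_2$-conflict there is between the middle vertex and an endpoint). For trees with $\Delta(T)\le 6$, however, the entire difficulty of the theorem is concentrated in the local extension step you describe, and at exactly that point you switch from proof to assertion: ``a short case analysis on the coincidences among the resulting sums disposes of the remaining configurations'' and ``the key to closing the argument is to verify that the $1+|L_v|$ affine hyperplanes cannot simultaneously cover every such tuple.'' That verification \emph{is} the theorem in the regime $\Delta(T)\in\{4,5,6\}$, $k=\Delta(T)+2$, with many leaf children, which is precisely where the counting margin vanishes; it is not carried out.

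There is also a concrete flaw in the bookkeeping even where you claim the count closes. Condition (ii) for the reserved child $c_t$, when $c_t$ is itself a leaf, reads $\sum_{i\neq t}a_i\neq -b$ and therefore does not involve the slack variable $a_t$ at all: it is a constraint on the labels you have already fixed ``greedily (distinct and different from $b$).'' If those happen to sum to $-b$, no choice of $a_t$ repairs the conflict, and permuting the first $t-1$ labels does not change their sum. So your tally of at most $t+1+|L_v\setminus\{c_t\}|$ forbidden residues for $a_t$ does not account for all of (ii), and the claim that $k>2t$ ``already settles $\Delta(T)\le 3$'' is not established as written. This particular point is repairable (impose the extra condition when choosing $a_{t-1}$, or reserve a non-leaf child as $c_t$ when one exists), but as it stands the argument is incomplete even in the easy range and entirely missing in the hard one.
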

For an integer $r\geq 2$, a tree $T$ is called $r$-\textit{regular} if each non-leaf of $T$ has degree $r$.

\begin{mytheorem}[\cite{ref_Jon}]\label{regular}
If $T$ is a regular tree of order at least $6$, then
 $\chi'_t(T)\leq \Delta(T)+2$.
Moreover if $\Delta(T)\equiv 1 \pmod 4$ then $\chi'_t(T)=\Delta(T)+2$.
\end{mytheorem}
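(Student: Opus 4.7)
For the upper bound $\chi'_t(T)\leq r+2$, where $r=\Delta(T)$, I would build a proper edge labeling $f:E(T)\to\zet_{r+2}$ inductively, rooting $T$ at an internal vertex and processing the vertices in breadth-first order. When a non-root internal vertex $v$ is visited, the label $a$ of its parent edge is already fixed, and one must choose labels for the remaining $r-1$ edges going to the children of $v$ from $\zet_{r+2}\setminus\{a\}$, a set of $r+1$ elements. The $\binom{r+1}{r-1}=\binom{r+1}{2}$ choices of which two labels to omit yield many possible values of $w(v)=a+\sum_{c\text{ child of }v}f(vc)$; combined with the freedom to permute which label goes to which child, this gives enough room to guarantee simultaneously that $w(v)\neq w(\mathrm{parent}(v))$ and that for every leaf child $\ell$ of $v$ one has $f(v\ell)\neq w(v)$ (so that $w(\ell)=f(v\ell)\neq w(v)$). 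The weight-differences between $v$ and its internal children are postponed to when those children themselves are processed. Small base cases, in particular the star $K_{1,r}$, are verified directly.

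For the lower bound $\chi'_t(T)\geq r+2$ when $r\equiv 1\pmod 4$, assume for contradiction that a twin edge $(r+1)$-coloring $f:E(T)\to\zet_{r+1}$ exists. Because the $r$ edges at each internal vertex $v$ carry distinct labels from $\zet_{r+1}$, exactly one label $m(v)\in\zet_{r+1}$ is omitted at $v$, and since $r$ is odd,
\[
w(v)\equiv \tfrac{r+1}{2}-m(v)\pmod{r+1}.
\]
Two constraints follow: (i) adjacent internal $u,v$ must satisfy $m(u)\neq m(v)$; (ii) at any internal vertex $v$ with a leaf neighbor, $w(v)$ cannot coincide with any leaf-edge label at $v$, and the alternative $w(v)=m(v)$ is also ruled out because it would require $2m(v)\equiv (r+1)/2 \pmod{r+1}$, which has no solution when $r\equiv 1\pmod 4$: the element $(r+1)/2$ is odd while $2m(v)$ always lies in the proper subgroup $2\zet_{r+1}$ (since $r+1\equiv 2\pmod 4$). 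Consequently $w(v)=f(vu)$ for some internal neighbor $u$ of $v$, defining a well-defined ``pointer'' $g(v):=u$ at every penultimate internal vertex.

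To finish, I would trace pointer walks inside the internal subtree $T^{\circ}$ of $T$. Starting from any leaf $v_1$ of $T^{\circ}$ (which has a unique internal neighbor, forcing $g(v_1)$), the walk $v_1\to g(v_1)\to g(g(v_1))\to\cdots$ cannot immediately backtrack: $g(u)=v$ and $g(v)=u$ would force $m(u)=m(v)$, violating (i). In a finite tree a non-backtracking walk is simple and must terminate, and if it terminates at another leaf of $T^{\circ}$ the forced pointer yields the desired contradiction. For the star $K_{1,r}$ the argument collapses to the impossibility of $2m(c)\equiv (r+1)/2$ in $\zet_{r+1}$, which is the essential obstruction underlying the theorem. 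The main obstacle is the case when the walk ends at an internal vertex whose $r$ neighbors are all internal (so no pointer is defined there); I expect to resolve this either by combining pointer walks emanating from the several leaves of $T^{\circ}$, or by a global parity identity obtained from $\sum_{v\in V(T)}w(v)\equiv 2\sum_{e\in E(T)}f(e)\pmod{r+1}$, to produce an arithmetic contradiction.
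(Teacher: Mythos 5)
First, note that the paper itself does not prove Theorem~\ref{regular}: it is quoted from Johnston's paper \cite{ref_Jon}, so there is no in-paper proof to compare against. The closest analogue in this paper is Observation~\ref{sharp}, whose lower-bound argument is, in essence, exactly your pointer-walk argument (there carried out for $(\zet_3)^{2p+1}$ with two missing labels per internal vertex instead of one). Your lower-bound reasoning is sound: the formula $w(v)=\tfrac{r+1}{2}-m(v)$ is correct (the sum of all elements of $\zet_{r+1}$ is its unique involution), and the observation that $2m(v)=\tfrac{r+1}{2}$ is unsolvable when $r+1\equiv 2\pmod 4$ is the right arithmetic obstruction. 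However, the ``main obstacle'' you flag at the end is illusory, and the extra machinery you propose (combining several walks, or a global parity identity) is unnecessary. The pointer is defined at \emph{every} internal vertex, not only at penultimate ones: for any internal $v$, $w(v)$ lies in $\{m(v)\}\cup\{f(vu):u\in N(v)\}$, the case $w(v)=m(v)$ is excluded by the arithmetic fact above, and the case $w(v)=f(vu)$ with $u$ a leaf is excluded because it would give $w(u)=f(vu)=w(v)$. None of this uses the existence of a leaf neighbor of $v$. Hence $g(v)$ is always an internal neighbor, the walk never backtracks (backtracking gives $w(u)=f(uv)=w(v)$ for adjacent $u,v$), so it is an infinite simple path in a finite tree --- an immediate contradiction. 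With that one correction your lower bound is complete and coincides with the technique of Observation~\ref{sharp}.

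The genuine gap is in the upper bound $\chi'_t(T)\le r+2$, which you only sketch. The claim that $\binom{r+1}{2}$ choices of omitted pair ``combined with the freedom to permute which label goes to which child'' give ``enough room'' is asserted, not verified, and the verification is not routine in the hardest case: when all children of $v$ are leaves, permuting labels among children buys you nothing, and you are forced to choose the omitted pair $\{x,y\}$ so that $w(v)=S-x-y$ lands in $\{a,x,y\}$ (where $a$ is the parent label and $S=\sum_{g\in\zet_{r+2}}g$) while simultaneously avoiding $w(\mathrm{parent}(v))$. This amounts to solving equations such as $2x+y=S$ with several excluded values, and whether enough solutions survive depends on $r$; the boundary cases (small $r$, the root, stars $K_{1,r}$) each need explicit checking. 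As written, the upper-bound half is a plausible strategy rather than a proof, and this is where the substantive work of the theorem lies.
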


As for a general upper bound, the best thus far result is due to Johnston \cite{ref_Jon}, who proved the following.

\begin{mytheorem}[\cite{ref_Jon}]\label{JohnstonTh}
If $G$ is a connected graph of order at least $3$, then $\chi'_t(G)\leq 4\Delta(G)-3$.
\end{mytheorem}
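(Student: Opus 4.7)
The plan is to combine Vizing's theorem with an iterative ``spanning-tree swing'' procedure. Set $k := 4\Delta - 3$ where $\Delta := \Delta(G)$; since $G$ is connected with at least three vertices, $\Delta \geq 2$, so $k \geq \Delta + 1$.

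I would begin with a proper edge $(\Delta+1)$-coloring $f \colon E(G) \to \{1, \ldots, \Delta+1\} \subseteq \zet_k$ guaranteed by Vizing's theorem. The properness of $f$ as an edge coloring will be preserved throughout; only the distinguishing condition on weighted vertex sums needs to be achieved via subsequent adjustments, which will modify some labels but always keep them valid in $\zet_k$.

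Fix a spanning tree $T$ of $G$ rooted at a vertex $r$, and process the non-root vertices in reverse BFS order (deepest first). For each non-root $v$ with BFS-parent $p = p(v)$, designate the tree edge $vp$ as $v$'s ``swing edge.'' When $v$ is processed, replace $f(vp)$ by $f(vp) + \alpha_v$ for some shift $\alpha_v \in \zet_k$ selected so that (i) the updated label $f(vp)+\alpha_v$ remains distinct from all other labels incident to $v$ and to $p$, forbidding at most $2(\Delta-1)$ values of $\alpha_v$; and (ii) the updated weighted sum $w(v)$ differs modulo $k$ from the fixed sums at the already-processed neighbors of $v$, forbidding at most $\Delta-1$ further values (note that $p$ itself is not yet processed). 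Since the shifts at later steps only alter sums of vertices not yet processed, each processed neighbor's sum is indeed stable by the time it is consulted, and the total of at most $3\Delta-3$ forbidden values is strictly less than $k$, leaving at least $\Delta$ legal choices for $\alpha_v$.

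Two obstacles then dominate the remaining analysis. First, modifying $f(vp)$ also shifts $w(p)$ by $\alpha_v$, so the difference $w(v) - w(p)$ is preserved at step $v$ and cannot be fixed there. This resolves itself upward: when $p$ is processed at its own step, $v$ is among $p$'s already-processed neighbors, and $\alpha_p$ can be chosen within the same count above to ensure $w(p) \neq w(v)$. Second, and more delicate, the root $r$ has no parent edge to swing, yet its sum must be distinguished from those of its children. I would handle this by reserving a specific incident edge $e_r = r u_0$ as $r$'s private swing edge from the outset (excluding it from its role as $u_0$'s swing edge when $u_0$ is a child of $r$) and performing a final shift step at $r$ using $\alpha_r$. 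Verifying that the $\Delta$-slack in the counting suffices to absorb the additional constraints imposed at $u_0$ (both properness at $u_0$ and distinguishing between $w(u_0)$ and $w(r)$) is the main technical hurdle of the proof.
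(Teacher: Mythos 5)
This statement is Johnston's theorem, which the paper cites from \cite{ref_Jon} rather than proving; the paper's own route to (a strengthening of) this bound is Theorem~\ref{UpperBoundTh}, a greedy construction along a degeneracy ordering yielding $\chi'_g(G)\leq 2(\Delta(G)+{\rm col}(G))-5\leq 4\Delta(G)-3$ since ${\rm col}(G)\leq\Delta(G)+1$. Your strategy --- start from a Vizing $(\Delta+1)$-edge-coloring and then adjust one designated tree edge per vertex --- is therefore genuinely different from anything in the paper, and most of it is sound: the stability argument (once $v$ is processed, no edge incident with $v$ is ever modified again, because the only tree edges at $v$ are its own swing edge and those of its already-processed children), the count $2(\Delta-1)+(\Delta-1)=3\Delta-3<4\Delta-3$, and the deferral of the $v$-versus-parent conflict to the parent's step all check out.

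The gap is at the root, and it is not merely a ``technical hurdle'' of counting, as you suggest, but a structural obstruction to the mechanism you propose. Any shift of $f(ru_0)$ by $\alpha_r$ changes $w(r)$ and $w(u_0)$ by exactly the same amount, so the difference $w(r)-w(u_0)$ is invariant under your final step at $r$: no choice of $\alpha_r$, however much slack the counting leaves, can separate $r$ from $u_0$. The same is true of the shift performed at $u_0$'s own step, since in a BFS tree every neighbor of $r$ is a child of $r$ and so $u_0$'s swing edge is again $u_0r$. Hence the pair $\{r,u_0\}$ is never distinguished by any sequence of single-edge swings on $ru_0$, and your parenthetical ``distinguishing between $w(u_0)$ and $w(r)$'' names precisely the constraint that this mechanism cannot satisfy. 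To close the argument you need a second degree of freedom at the root --- for instance, simultaneously re-shift two edges $ru_0$ and $ru_1$ by independent amounts $\alpha$ and $\beta$, so that $w(r)-w(u_0)$ moves by $\beta$ and $w(r)-w(u_1)$ by $\alpha$, at the price of re-verifying all of $u_1$'s properness and distinguishing constraints in a two-parameter count --- or sidestep the issue entirely by building the labeling from scratch along a vertex ordering as in the proof of Theorem~\ref{UpperBoundTh}, where the last backward edge of each vertex is chosen with both endpoints' sums in view and at most one deferred conflict per vertex ever arises.
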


Assume $\gr$ is an Abelian group  of order $k$ with the operation denoted by $+$ and the identity element $0$. For convenience we will write $ma$ to denote $a+a+\ldots+a$ (where element $a$ appears $m$ times), $-a$ to denote the inverse of $a$ and we will use $a-b$ instead of $a+(-b)$. Moreover, the notation $\sum_{a\in S}{a}$ will be used as a short form for $a_1+a_2+a_3+\ldots$, where $a_1, a_2, a_3, \ldots$ are all the elements of the set $S$.
We will call a proper edge labeling $f:E(G)\rightarrow \gr$ a $\gr$-\textit{twin edge coloring} if the resulting \emph{weighted degrees}, defined for every vertex $v\in V(G)$ as the sum
(in $\gr$):
$$
w(v)=\sum_{u\in N(v)}f(uv),
$$
yield a proper vertex coloring of $G$, i.e. we have $w(u)\neq w(v)$ for every edge $uv\in E(G)$.
We will also call $w(v)$ the \emph{color} of a vertex $v$ or the \emph{sum at} $v$, while such a labeling $f$ will be referred to as \emph{neighbor sum distinguishing} as well.
Generalizing the concept of the twin chromatic index,
the least integer $k\geq 2$ for which $G$ has a $\gr$-twin edge coloring for every Abelian group $\gr$ of order $k$ is called the \textit{group twin chromatic index} of $G$ and is denoted by $\chi'_g(G)$. Obviously $\chi'_t(G)\leq\chi'_g(G)$ for any graph $G$ (without isolated edges), and there are plenty of graphs for which $\chi'_t(G)<\chi'_g(G)$ (cf. Theorem~\ref{regular} and Observation~\ref{sharp}).
Note here also that the fact that $\chi'_g(G)\leq K$ for a given graph $G$ and a constant $K$ does not guarantee that for every
group $\gr$ of order $k>K$ there exists a $\gr$-twin edge coloring  of  $G$,  
and see our concluding Section~\ref{SectionOurConcludingRemarks} for a further discussion concerning this issue.

Surprisingly, in this paper we in fact provide an infinite family of connected graphs (which are trees) for which $\chi'_g(G)\geq \Delta(G)+3$, see Theorem~\ref{sharp}. Such phenomenon is not known for a few forefathers of this graph invariant discussed above
(cf. additionally the conjecture in~\cite{Zhang}, and the best known result concerning this from~\cite{Hatami}), for which $\Delta(G)+2$ labels are suspected to suffice for almost all connected graphs. In case of the group twin chromatic index, we conjecture that $\Delta(G)+3$ labels should always be sufficient and confirm this for all trees (which are not isolated edges). 
On the way we also discuss many rich families of trees for which such an upper bound can be improved.
We then use our result concerning trees as a base case in a proof of a general upper bound for all graphs for which the parameter $\chi'_g(G)$ is well defined.
Namely, by means of a straightforward algorithmic construction (efficient for all connected graphs except possibly some trees) 
we finally provide a two-fold improvement of Theorem~\ref{JohnstonTh} of Johnston, whose proof is rather complex and lengthy. That is, we show that $\chi'_g(G)\leq 2(\Delta(G)+{\rm col}(G))-5$ for every 
graph $G$ without isolated edges, 
where ${\rm col}(G)$ denotes the coloring number of $G$ (which is equal to the degeneracy of $G$ plus $1$).
This strengthens the thesis of Theorem~\ref{JohnstonTh}, as ${\rm col}(G)-1\leq \Delta(G)$, while this inequality is sharp for many graph classes (e.g. for planar graphs, for which ${\rm col}(G)\leq 6$ whereas $\Delta(G)$ is unbounded), and extends it towards colorings with elements of all Abelian groups, not just $\mathbb{Z}_k$.

\section{Preliminaries}
Assume $\gr$ is an Abelian group of order $n$. The order of an element $a\neq 0$ is the smallest $r$ such that $ra=0$. Recall that any group element $\iota\in\gr$ of order 2 (i.e., $\iota\neq 0$ such that $2\iota=0$) is called \emph{involution}.  It is well known by Lagrange Theorem that the order of any element of $\gr$ divides $|\gr|$ \cite{ref_Gal}. Therefore every  group of odd order has no involution. The fundamental theorem of finite Abelian groups states that a finite Abelian group $\gr$ of order $n$ can be expressed as the direct product of cyclic subgroups of prime-power orders. This implies that
$$\gr\cong\zet_{p_1^{\alpha_1}}\times\zet_{p_2^{\alpha_2}}\times\ldots\times\zet_{p_k^{\alpha_k}}\;\;\; \mathrm{where}\;\;\; n = p_1^{\alpha_1}\cdot p_2^{\alpha_2}\cdot\ldots\cdot p_k^{\alpha_k}$$
and $p_1,p_2,\ldots,p_k$ are not necessarily distinct primes. This product is unique up to the order of the direct product. When $t$ is the number of these cyclic components
whose order is a multiple of $2$, then $\gr$ has $2^t-1$ involutions. In particular every cyclic group of even order has exactly one involution. 

The sum of all the
group elements is equal to the sum of the involutions and the neutral element. The following lemma was proved in~\cite{CN} (Lemma 8).

\begin{mylemma}[\cite{CN}]\label{involution} Let $\gr$  be an Abelian group.
\begin{enumerate}
	\item If $\gr$ has exactly one involution $\iota$, then $\sum_{g\in \gr}g= \iota$.
	\item If $\gr$ has no involution, or more than one involution, then $\sum_{g\in \gr}g=0$.
\end{enumerate}
\end{mylemma}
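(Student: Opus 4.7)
The plan is to exploit the bijection $g\mapsto -g$ on $\gr$. Its fixed points are exactly the elements $g$ satisfying $2g=0$, namely the identity $0$ together with all involutions; call this $2$-torsion subgroup $H$. For every $g\notin H$, the pair $\{g,-g\}$ contributes $0$ to $\sum_{g\in\gr}g$, so
$$
\sum_{g\in\gr}g \;=\; \sum_{h\in H}h,
$$
and it therefore suffices to compute $\sum_{h\in H}h$. The three cases in the statement correspond respectively to $|H|=2$, $|H|=1$, and $|H|\geq 3$.

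The first two are immediate: if $|H|=2$, say $H=\{0,\iota\}$, then the sum is $\iota$, giving part~1; if $|H|=1$, the sum is $0$, yielding the ``no involution'' subcase of part~2. Only the case of more than one involution ($|H|\geq 3$) carries any real content, and there I need to show $\sum_{h\in H}h=0$.

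For this I would invoke the fundamental theorem of finite Abelian groups already recalled in the paper. Since every non-zero element of $H$ has order~$2$, $H$ must be an elementary Abelian $2$-group, isomorphic to $\zet_2\times\cdots\times\zet_2$ with at least two factors; in particular $|H|$ is a power of $2$ not smaller than $4$. Fixing any involution $\iota_0\in H$, I would then partition $H$ into cosets of the order-$2$ subgroup $\{0,\iota_0\}$. Each such coset has the form $\{h,h+\iota_0\}$ and contributes $2h+\iota_0=\iota_0$ to the sum, whence
$$
\sum_{h\in H}h \;=\; \frac{|H|}{2}\,\iota_0 \;=\; 0,
$$
the last equality holding because $|H|/2$ is a power of $2$ at least $2$, hence even, combined with $2\iota_0=0$.

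The only step that is not purely formal is this final one, and its substance lies entirely in the structural input that the $2$-torsion of a finite Abelian group is elementary Abelian of order a power of $2$; once that is granted, the coset pairing is routine bookkeeping.
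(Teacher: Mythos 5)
Your argument is correct and complete: the reduction of $\sum_{g\in\gr}g$ to the $2$-torsion subgroup $H$ via the pairing $g\mapsto -g$, the trivial cases $|H|\in\{1,2\}$, and the coset computation $\sum_{h\in H}h=\frac{|H|}{2}\,\iota_0=0$ when $|H|=2^t$ with $t\geq 2$ all check out. Note that the paper itself gives no proof of this lemma --- it is quoted from the reference \cite{CN} --- so there is nothing to compare against line by line; your proof is the standard one for this fact, and the only structural input you use (that the $2$-torsion of a finite Abelian group is elementary Abelian of $2$-power order, so $|H|/2$ is even once $|H|\geq 3$) is correctly identified and correctly justified.
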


Anholcer and Cichacz proved a lemma about a partition of the set of all elements of $\gr$ of order at most $2$ into two zero-sum sets  (see \cite{ref_AnhCic1}, Lemma 2.4). Their result along with results proved by Cichacz (see \cite{ref_Cic}, Lemma 3.1) give the  following lemma.
\begin{mylemma}[\cite{ref_AnhCic1,ref_Cic}]\label{involutions} Let $n_1$, $n_2$, $n_3$ be non-negative integers such that $n_1+n_2+n_3= 2^k$ with integer $k \geq2$, and $k>2$ if $n_1n_2n_3\neq0$. Let $\gr$  be an Abelian group  with involution set $I^* =\{\iota_1,\iota_2,\ldots,$ $\iota_{2^k-1}\}$ and set $I = I^*\cup\{0\}$.
Then there exists a partition $A=\{A_1, A_2, A_3\}$ of $I$ such that
\begin{enumerate}
  \item $n_1= |A_1|$, $n_2= |A_2|$, $n_3= |A_3|$,
  \item $\sum_{a\in A_i}a=0$ for $i\in\{1,2,3\}$,
\end{enumerate}
 if and only if $n_1,n_2,n_3\not\in\{2,2^k-2\}$.
\end{mylemma}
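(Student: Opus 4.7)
The plan is to treat $I=I^*\cup\{0\}$ as the elementary Abelian $2$-subgroup of $\gr$ (isomorphic to $\zet_2^k$), in which every element is self-inverse: $-a=a$ for all $a\in I$. Necessity then falls out almost immediately. If $|A_i|=2$, write $A_i=\{a,b\}$ with $a\ne b$; the zero-sum condition forces $b=-a=a$, a contradiction. If $|A_i|=2^k-2$, then since $k\ge 2$ the set $I$ contains at least three involutions, so Lemma~\ref{involution} gives $\sum_{a\in I}a=0$; hence $\sum_{a\in A_i}a=0$ is equivalent to $\sum_{a\in I\setminus A_i}a=0$, but $|I\setminus A_i|=2$ and we have just ruled out zero-sum $2$-subsets.

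For sufficiency I would split into two cases and delegate each to one of the two cited results. If some $n_j=0$ (say $n_3=0$), the task reduces to partitioning $I$ into two zero-sum sets of prescribed sizes $n_1,n_2\notin\{2,2^k-2\}$; since $\sum_{a\in I}a=0$, the complement of any zero-sum subset is zero-sum, so producing a single $A_1$ of size $n_1$ suffices, and this is exactly the content of Lemma~2.4 of \cite{ref_AnhCic1}. If instead all $n_j\ge 1$, then by hypothesis $k\ge 3$, hence $|I|\ge 8$, and the desired tripartition is delivered directly by Lemma~3.1 of \cite{ref_Cic}. Gluing the two cases together yields the ``if'' direction of the claimed equivalence.

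The main obstacle, of course, is the explicit construction behind Case~2: producing three zero-sum subsets of $\zet_2^k$ of prescribed sizes simultaneously. A natural approach is to carve off a small zero-sum piece $A_1$ first ($\{0\}$ when $n_1=1$, a triple $\{a,b,a+b\}$ when $n_1=3$, a $2$-dimensional subspace when $n_1=4$, and so on), then bipartition the remainder via the bipartition lemma. The subtlety is that $I\setminus A_1$ is not itself a subgroup, so that lemma cannot be invoked verbatim; one must re-verify that neither remaining target size equals $2$ or $|I\setminus A_1|-2$, handle the boundary cases $n_1\in\{3,2^k-3\}$ where the supply of admissible ``small'' sizes shrinks, and respect the side condition that forces $n_1n_2n_3=0$ when $k=2$ (otherwise $|I|=4$ leaves no room for three nonempty zero-sum parts avoiding size $2$). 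This careful size-and-parity bookkeeping, rather than the high-level reduction itself, is where the real work of \cite{ref_Cic} resides.
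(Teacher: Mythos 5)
The paper does not prove this lemma itself; it imports it verbatim as a combination of Lemma~2.4 of \cite{ref_AnhCic1} (the bipartition case, i.e.\ some $n_j=0$) and Lemma~3.1 of \cite{ref_Cic} (the genuine tripartition case), which is exactly the reduction you carry out, and your short necessity argument via self-inverse elements of $I\cong(\zet_2)^k$ is correct. So your proposal matches the paper's treatment; the only real content beyond the citations is the ``only if'' direction, which you handle properly.
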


\section{Group twin edge coloring for trees}
We start with the following lemmas:
\begin{mylemma} Let $\gr$ be a finite Abelian group of  order $|\gr|\geq6$ having at most one involution.
For any elements $a,b\in\gr$ such that $2b=0$, $a\neq b$ there exist  elements $x,y$, $x\neq y$, $\{a,b\}\cap \{x,y\}=\emptyset$ such that   $x+y=b-a$.\label{trzyzero} 
\end{mylemma}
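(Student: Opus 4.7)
My plan is to recast the lemma as a pair-counting argument. Set $s=b-a$, which is nonzero since $a\neq b$; the task is to exhibit two distinct elements $x,y\in \gr\setminus\{a,b\}$ with $x+y=s$. Let $P$ denote the collection of all unordered pairs $\{x,y\}$ of distinct elements of $\gr$ with $x+y=s$; then $|P|=(|\gr|-k)/2$, where $k$ is the number of solutions of $2x=s$ in $\gr$. Because $\gr$ has at most one involution, the kernel of the doubling map $x\mapsto 2x$ has size at most $2$, so $k\in\{0,1,2\}$.

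The pairs of $P$ that must be discarded are $\{b,s-b\}$ (which always belongs to $P$, since $2b=0\neq s$) and $\{a,s-a\}$ (which belongs to $P$ precisely when $2a\neq s$, equivalently when $3a\neq b$). Hence at most two members of $P$ are forbidden, and the lemma follows at once whenever $|P|\geq 3$, i.e., $|\gr|\geq k+6$. This inequality is automatic in three of the four possible sub-cases: (i) $\gr$ has no involution, in which case $|\gr|$ is odd and $\geq 7$ with $k=1$, giving $|P|\geq 3$; (ii) $\gr$ has one involution and $k=0$, so $|P|=|\gr|/2\geq 3$; and (iii) $\gr$ has one involution, $k=2$, and $|\gr|\geq 8$.

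The one remaining situation, which I expect to be the main point needing care, is $|\gr|=6$ together with $k=2$. Then $\gr\cong\zet_6$ and only $|P|=2$ candidate pairs are available, so a crude count no longer suffices. I would dispose of this borderline by observing that, under these constraints, $a$ itself is forced to solve $2x=s$, and consequently $\{a,s-a\}=\{a,a\}$ is not actually a pair in $P$; only the single pair $\{b,s-b\}$ is forbidden, so a good pair survives. To verify this observation, note that $k=2$ in $\zet_6$ forces $s$ to lie in the image of doubling, namely $\{0,2,4\}$; combined with $b\in\{0,3\}$, $s=b-a$, and $a\neq b$, one is reduced to exactly four candidate pairs $(a,b)$, and a direct check confirms $3a=b$, hence $2a=s$, in each.
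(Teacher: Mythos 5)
Your proof is correct, and it takes a genuinely different route from the paper's. The paper argues constructively: it splits into cases according to whether $0\in\{a,b\}$, writes down explicit candidates (e.g.\ $x=b-a$, $y=0$, or $x=a+c$, $y=-c$ for a suitably chosen $c$), and verifies nonemptiness of the small exceptional sets $\gr\setminus(S_{z/2}\cup\{\ldots\})$ in each case; the only counting input is the observation that $|S_{z/2}|\le 2$. You instead count all unordered solution pairs of $x+y=s$ at once, getting $|P|=(|\gr|-k)/2$ with $k\in\{0,1,2\}$, note that at most the two pairs $\{b,s-b\}$ and $\{a,s-a\}$ are forbidden, and win immediately whenever $|\gr|\ge k+6$; this collapses the paper's several cases into one uniform inequality, leaving only the borderline $\gr\cong\zet_6$, $k=2$, which you resolve by the neat observation that there $3a=b$ is forced, so the pair through $a$ degenerates and only one of the two pairs in $P$ is actually excluded. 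I checked the details: the identity $2a=s\Leftrightarrow 3a=b$, the four candidate pairs $(a,b)\in\{(2,0),(4,0),(1,3),(5,3)\}$ in $\zet_6$, and the parity/order facts behind cases (i)--(iii) are all correct. Your argument is shorter and arguably more transparent about why $|\gr|\ge 6$ is the right threshold; the paper's construction has the minor advantage of producing $x$ and $y$ explicitly (a feature the authors exploit later when they remark that for $|\gr|\ge 10$ one can additionally force $x,y\neq 0$ — a refinement your counting version also yields easily, by excluding one more pair).
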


\begin{proof}
 For $z\in \gr$ let $S_{z/2}=\{t \in \gr: 2t =z \}$. Observe that if there exist pairwise distinct $t_1,t_2,t_3 \in S_{z/2}$, then $t_1-t_2$ and $t_1-t_3$ are distinct involutions, therefore $|S_{z/2}|\leq 2$ for any $z \in \gr$.

We will show that there exists a desired solution of the equation $x+y+a-b=0$. Suppose first that $0 \not \in \{a,b\}$. Then we may set $x = b-a$ and $y = 0$ 
unless $b = 2a$. In the latter case we must however have $|\gr|>6$ (as $b$ is the only involution in $\gr$) and for any $c \in \gr \setminus(S_{-a/2}\cup \{0, b, a,-a\})\neq\emptyset$,  $x = a+c$
and $y = -c$ yield a solution. Similarly,   if $b
\neq 0$ and $a=0$,
then for  $c \in \gr \setminus (S_{b/2}\cup \{0, b\})$  by setting $x = b + c$ and $y =- c$ we obtain a desired solution. Finally, if $b = 0$, then $x = c-a$ and
$y =-c$ are valid provided $c \in \gr \setminus(S_{a/2}\cup \{0, a,-a,2a\})$, while to see that this last set is always nonempty it suffices to note that if $|\gr|=6$ and $|S_{a/2}|=2$, then $-a=2a$.
\end{proof}

We also prove a somewhat stronger version of Lemma~\ref{involutions} (for the case when $n_3=0$).
\begin{mylemma}\label{involutions2} Let $\gr$  be an Abelian group  with involution set 
$I^* =\{\iota_1,\iota_2,\ldots,\iota_{2^k-1}\}$, $k \geq2$, and let $I = I^*\cup\{0\}$.
Given an element $\iota\in I^*$ and positive integers $n_1$, $n_2$  such that $n_1+n_2= 2^k$, $n_1\neq 2$ and $n_2 \geq 3$, there exists a partition $A=\{A_1, A_2\}$ of $I$ such that
\begin{enumerate}
  \item $n_1= |A_1|$, $n_2= |A_2|$,
  \item $\sum_{a\in A_i}a=0$ for $i=1,2$,
	\item $\iota \not \in A_1$.
\end{enumerate}

\end{mylemma}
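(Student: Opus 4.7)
The approach I propose is to reduce the problem to finding a zero-sum set $A_1 \subseteq I \setminus \{\iota\}$ of size $n_1$. Since $I$ (as an Abelian group) is isomorphic to $\zet_2^k$ with $k \geq 2$ and hence has more than one involution, Lemma~\ref{involution} gives $\sum_{a \in I}a = 0$, so $A_2 := I \setminus A_1$ is automatically zero-sum, of size $n_2$, and contains $\iota$. For $n_1 = 1$ I simply take $A_1 = \{0\}$, so henceforth assume $n_1 \geq 3$.

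I would then invoke Lemma~\ref{involutions} with the triple $(n_1, n_2, 0)$; its hypotheses hold, since $n_1, n_2 \neq 2$ (by assumption and since $n_2 \geq 3$), $n_1 \neq 2^k - 2$ (as $n_2 \geq 3$ forces $n_1 \leq 2^k - 3$), $n_2 \neq 2^k - 2$ (as $n_1 \neq 2$), and $0 \notin \{2, 2^k - 2\}$ for $k \geq 2$. The lemma delivers a partition $\{B_1, B_2\}$ of $I$ into zero-sum sets of the required sizes. If $\iota \in B_2$, I set $A_1 := B_1$ and stop. Otherwise $\iota \in B_1$, and I repair the partition by a \emph{swap}: find $\alpha \in B_1 \setminus \{\iota\}$ and distinct $\beta, \gamma \in B_2$ satisfying $\iota + \alpha = \beta + \gamma$, and exchange $\{\iota, \alpha\}$ with $\{\beta, \gamma\}$ between the two parts. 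Both cardinalities and both zero-sums are preserved, and $\iota$ then lies in the second part.

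The main obstacle is showing that such a swap always exists. Set $D := (\iota + B_1) \setminus \{0\}$ (of size $n_1 - 1$) and $T := (B_2 + B_2) \setminus \{0\}$ (the set of sums of distinct pairs from $B_2$); both are subsets of $I^* := I \setminus \{0\}$, and the swap exists exactly when $D \cap T \neq \emptyset$. Since $0 \in B_2 + B_2$ and $|B_2 + B_2| \geq |B_2|$ (as $B_2 + B_2 \supseteq b_0 + B_2$ for any $b_0 \in B_2$), we have $|T| \geq n_2 - 1$. If the swap failed, disjointness of $D$ and $T$ in $I^*$ would force $|T| \leq n_2$, so $|T| \in \{n_2 - 1, n_2\}$. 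In the case $|T| = n_2 - 1$, a standard sumset argument shows $B_2$ is a coset $H + c$ of a subgroup $H$ of order $n_2$, and $T = H \setminus \{0\}$; combined with $\iota \notin B_2$ (which gives $\iota + c \notin H$, so the cosets $H$ and $\iota + H + c$ are disjoint) one obtains $H \subseteq \iota + B_1$, hence $T \subseteq D$, contradicting disjointness. In the case $|T| = n_2$, disjointness forces $D \sqcup T = I^*$; a short manipulation then gives $T = I \setminus (\iota + B_1) = \iota + B_2$, and since $B_2 + B_2 = T \cup \{0\} = (\iota + B_2) \cup \{0\}$ is closed under addition, it is a subgroup $H$ of $I$ of order $n_2 + 1$. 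Therefore $n_2 + 1$ is a power of $2$ at least $4$, forcing $n_2$ to be odd, and the direct computation $\sum_{a \in B_2} a = \sum_{h \in H \setminus \{0\}}(h + \iota) = 0 + n_2 \iota = \iota$ contradicts the zero-sum property of $B_2$. Both subcases being impossible, the swap always exists, which completes the proof.
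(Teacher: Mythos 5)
Your proof is correct, but it takes a genuinely different route from the paper's. The paper splits on the common parity of $n_1$ and $n_2$: when both are even it applies Lemma~\ref{involutions} to get sizes $(n_1,n_2,0)$ and then, if $\iota$ lands in the wrong part, translates the \emph{entire} partition by a suitable involution $x$ (the zero-sums survive because the part sizes are even, and the translation moves $\iota$ into the second part); when both are odd it runs an induction on $k$, lifting a partition of $(\zet_2)^{k-1}$ to $(\zet_2)^{k}$ coordinate by coordinate, with some care about where $\iota$ projects. Your argument instead applies Lemma~\ref{involutions} once (the verification of its hypotheses for the triple $(n_1,n_2,0)$ is complete and correct) and repairs a misplaced $\iota$ by exchanging a pair $\{\iota,\alpha\}\subseteq B_1$ with an equal-sum pair $\{\beta,\gamma\}\subseteq B_2$; the existence of such an exchange is reduced to $D\cap T\neq\emptyset$ and settled by a short additive-combinatorics analysis: the two extremal cases $|T|=n_2-1$ and $|T|=n_2$ force $B_2$ to be a coset, respectively $B_2+B_2$ to be a subgroup of order $n_2+1$, and each possibility contradicts either the assumed disjointness or the zero-sum hypothesis via $\sum_{a\in B_2}a=n_2\iota=\iota\neq 0$. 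I checked the details (the bound $|T|\geq n_2-1$, the standard fact that $|A+A|=|A|$ forces $A$ to be a coset, the identity $I^*\setminus D=\iota+B_2$, and the closure computation for $B_2+B_2$) and they are all sound; the cases $n_1=1$ and $n_1\geq 3$ are cleanly separated and the swap needs exactly $n_1\geq 2$ and $n_2\geq 2$, which hold. What your approach buys is uniformity: it treats both parities at once and avoids the induction entirely, at the cost of importing a (standard but not entirely trivial) sumset argument; the paper's translation trick for the even case is more elementary, but its odd case is considerably more laborious than your swap.
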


\begin{proof}
  Recall that since $I=\{0,\iota_1,\ldots,\iota_{2^k-1}\}$ is a subgroup of $\gr$, we have $I\cong (\zet_2)^k$. Observe that $n_1$ and $n_2$ have the same parity. If they are both even then there exists a partition $A=\{A_1, A_2\}$ of $I$ such that
$n_1= |A_1|$, $n_2= |A_2|$ and $\sum_{a\in A_i}a=0$ for $i=1,2$ by Lemma~\ref{involutions}. If now $\iota \in A_2$, we are done. If $\iota \in A_1$, then for some $\iota'\in A_2$  there exists exactly one  $x\in I^*$ such that $\iota'+x=\iota$. Define now $A'_1=\{a+x,a\in A_1\}$ and $A'_2=\{a+x,a\in A_2\}$. Note that $\sum_{a\in A'_i}a=\sum_{a\in A_i}a +|A_i|x=0$ for $i=1,2$ because $|A_i|$ is even. Hence  $\{A'_1,A_2'\}$ forms the desired partition.\\

Assume now that $n_1$ and $n_2$ are both odd.   One can see that the lemma holds for $k=2$.  Suppose that the statement of the theorem is true for all groups with at least three and less than $2^k-1$ involutions. Let us establish it for groups with $2^k-1$ involutions.  Let  $n_1'=n_1\pmod {2^{k-1}}$, $n_2'=n_2\pmod {2^{k-1}}$. Let $\iota=(i_1,i_2,\ldots,i_{k})$ and set $\iota'=(i_1,i_2,\ldots,i_{k-1})$.  If $n_2>2^{k-1}$, then note that $n_1= n_1'$,  $n_2 = n_2'+ 2^{k- 1}$ and there exists a partition $A'=\{A_1', A_2'\}$ of $(\zet_2)^{k-1}$ such that
$n_1'= |A_1|'$, $n_2'= |A_2'|$, $\sum_{a\in A_i'}a=0$ for $i=1,2$ by Lemma~\ref{involutions}. If now $i_{k}=1$, then replace each element $(y_1,y_2,\ldots,y_{k-1})$ of  $(\zet_2)^{k-1}$ in any $A'_i$ by the element $(y_1,y_2,\ldots,y_{k-1},0)$ of  $(\zet_2)^{k}$. Define $A_2''=\{(y,1),y\in (\zet_2)^{k-1}\}$ and set $A_1=A'_1$, $A_2=A_2'\cup A_2''$. Suppose now $i_{k}=0$. Note that then $\iota'\neq (0,0,\ldots,0)$. Replace each element $(y_1,y_2,\ldots,y_{k-1})$ of  $(\zet_2)^{k-1}$ in any $A'_i$ by the element $(y_1,y_2,\ldots,y_{k-1},0)$ except for $\iota'$ and some other element $x'$ such that $x'$ belongs to the same partition set as $\iota'$ (note we may do so, 
since $\iota'\neq 0$); for them we put $(\iota',1)$ and $(x',1)$. Define $A_2''=\{(y,1), y\in (\zet_2)^{k-1},y\notin\{\iota',x'\}\}$ and set $A_1=A_1'$ and $A_2=A_2'\cup A_2''\cup\{(\iota',0),(x',0)\}$. For $n_2<2^{k}$ there exists a partition $A'=\{A_1', A_2'\}$ of $(\zet_2)^{k-1}$ such that $\iota'\in A_2'$,
$n_1'= |A_1|'$, $n_2'= |A_2'|$, $\sum_{a\in A_i'}a=0$ for $i=1,2$ by the induction hypothesis. We then define $A_1$ and $A_2$ analogously as above.
\end{proof}
\begin{mylemma} Let $T$ be a tree of order at least $3$ with   maximum degree $t$  and $\gr$ be an Abelian group  of odd order $k\geq\max\{7, t+2\}$.
Then there exists a $\gr$-twin edge coloring  of $T$,
unless $T$ is a $(3^p-2)$-regular tree and $\gr\cong(\zet_3)^p$ for some integer $p$.\label{treeodd}
\end{mylemma}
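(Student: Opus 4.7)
My plan is a direct construction obtained by rooting $T$ at an arbitrary internal vertex $r$ (which exists since $|V(T)|\ge 3$) and processing the vertices in BFS order from $r$. When I reach a non-root vertex $v$ with parent $p$, the label $f(pv)$ and the sum $w(p)$ are already fixed, and I must select distinct labels $x_1,\ldots,x_d\in \gr\setminus\{f(pv)\}$ for the edges from $v$ to its $d$ children so that (a)~$\sum_i x_i\neq w(p)-f(pv)$, forcing $w(v)\neq w(p)$, and (b)~for every leaf child $c_i$ one has $\sum_{j\neq i}x_j\neq -f(pv)$, forcing $w(c_i)=x_i\neq w(v)$. For $v=r$ the parent-free analogues apply. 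The distinctness of the $x_i$'s together with $x_i\neq f(pv)$ gives the proper edge coloring automatically.

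The counting is favourable in most cases: since $d\le t-1\le k-3$, there are $\binom{k-1}{d}$ candidate subsets $S=\{x_1,\dots,x_d\}$, constraint~(a) eliminates only those whose sum hits one forbidden value, and (b) is an assignment issue that can be resolved by routing any problematic label to a non-leaf child. The delicate case is when $v$ attains the maximum $d=k-3$ and all its children are leaves. Here $S$ misses only two elements $y_1,y_2\in \gr\setminus\{f(pv)\}$, and by Lemma~\ref{involution} (which gives $\sum_{g\in\gr}g=0$, since $\gr$ has odd order and hence no involutions) one computes $\sum_{x\in S}x=-f(pv)-y_1-y_2$. Constraint~(a) then reads $y_1+y_2\neq -w(p)$, and constraint~(b) becomes $-y_1-y_2\in\{f(pv),y_1,y_2\}$, splitting into three subcases: $y_1+y_2=-f(pv)$, $y_2=-2y_1$, or $y_1=-2y_2$. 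In a generic odd-order $\gr$, one of these subcases is compatible with (a), and admissible $y_1,y_2$ are produced either by direct manipulation or via Lemma~\ref{trzyzero}.

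The main obstacle and the origin of the exception lie exactly in the structure of $\gr\cong(\zet_3)^p$: every nonzero element has order $3$, so $-2y=y$, and the last two subcases collapse to the forbidden $y_1=y_2$. Only $y_1+y_2=-f(pv)$ survives, which combined with $y_1+y_2\neq -w(p)$ forces $f(pv)\neq w(p)$ at every such tight vertex. When in addition $T$ is $(3^p-2)$-regular so that \emph{every} non-leaf has degree exactly $k-2$, this condition must hold simultaneously at every internal edge, and a short propagation argument shows it cannot be sustained throughout the tree, which yields the claimed exceptional case. The principal work in turning this plan into a formal proof is therefore a careful case analysis at the tight vertices, together with bookkeeping to track which children are leaves at each step; the underlying selections of $y_1,y_2$ and compatible label sets are supported by Lemmas~\ref{involution}, \ref{involutions}, \ref{involutions2}, and \ref{trzyzero} already established in the paper.
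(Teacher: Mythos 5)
Your framework (root the tree, process vertices in BFS order, greedily choose distinct child-edge labels subject to the sum constraints (a) and (b)) is plausible, and your analysis of the tight case $d=k-3$ correctly identifies $(\zet_3)^p$ as the source of the exception. However, there is a genuine gap at the heart of the argument. When $\gr\cong(\zet_3)^p$ and $k=t+2=3^p$ but $T$ is \emph{not} $(3^p-2)$-regular, the lemma still asserts that a coloring exists, yet such a $T$ may contain non-root internal vertices of degree $t$ all of whose children are leaves. At such a tight vertex your only surviving subcase is $y_1+y_2=-f(pv)$, which is compatible with (a) exactly when $f(pv)\neq w(p)$ --- and your construction maintains no invariant guaranteeing this: $w(p)$ is whatever the earlier greedy choices happened to produce, and nothing prevents $w(p)=f(pv)$ for a non-leaf child $v$. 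This is precisely where the paper's proof does its real work: it forces the labels on each set $N_i$ of child edges to sum to $0$ while keeping them off $0$ and off the parent-edge label, so that every non-root internal vertex of degree $\neq 2$ satisfies $w(v_i)=f(v^iv_i)$; then $w(p)\neq f(pv)$ follows for free from properness, and the tight non-root case is handled by Lemma~\ref{trzyzero} in \emph{every} odd-order group, including $(\zet_3)^p$. The entire $(\zet_3)^p$ obstruction is thereby confined to the root (where one needs distinct nonzero $a,b$ with $a+b=-a$, i.e.\ $3a\neq 0$), and the root is chosen as an internal vertex of minimum degree, which is non-tight unless $T$ is regular. Without some invariant of this kind your propagation of constraints does not close.

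Two further, smaller problems. Your resolution of (b) by ``routing any problematic label to a non-leaf child'' is unavailable when all $d$ children are leaves and $d$ is large but not maximal (e.g.\ $d=k-4$ or $k-5$); there you must still exhibit a $d$-subset $S$ of $\gr\setminus\{f(pv)\}$ with $\sum_{x\in S}x\neq w(p)-f(pv)$ and $f(pv)+\sum_{x\in S}x\notin S$, and this counting is asserted rather than carried out. Finally, your closing ``propagation argument'' addresses the wrong direction: the lemma does not ask you to show that $(3^p-2)$-regular trees with $(\zet_3)^p$ fail (a subfamily of that is Observation~\ref{sharp}); it asks you to show that every other combination succeeds, and the case $\gr\cong(\zet_3)^p$ with $T$ non-regular is exactly the one your write-up leaves unresolved.
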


\begin{proof}Let $T$ be a tree with  maximum degree $t$  and $\gr$ be an Abelian group  of odd order $k\geq\max\{7, t+2\}$
such that either $T$ is not a $t$-regular tree or $\gr\not\cong(\zet_3)^p$ with $3^p=t+2$.
Assume $T$ is rooted at a vertex $v_0$ with minimum degree $t'\geq 2$ in $T$.
Let
$v_1,v_2,\ldots,v_m$ be all the remaining  vertices of $T$ which are not leaves, and denote their corresponding numbers of children by $r_1,r_2,\ldots,r_m$. Note that $r_i$=deg$(v_i)-1$. Let $N_i=\{v_iw: w$ is a child of $v_i\}$ for $i=0,1,\ldots,m$.\\

If $t'$ is even, take $t'/2$ pairs $(d_1,-d_1),$ $\ldots,$ $(d_{t'/2},-d_{t'/2})$ of distinct elements of $\gr$ and arbitrarily label all edges incident with $v_0$ with these. Then $w(v_0)=0$.

Assume now $t'$ is odd. Suppose first that $t'<k-2$. Then in fact $t'\leq k-4$ and there exist nonzero pairwise distinct elements  $x,y,a\in\gr$ such that $x+y+a=0$ by Lemma~\ref{trzyzero}. Take $x,y,a$ and $(t'-3)/2$ distinct pairs $(d_1,-d_1),$ $\ldots,$ $(d_{(t'-3)/2},-d_{(t'-3)/2})$ from the remaining elements of $\gr$ and arbitrarily label all edges incident with $v_0$ with these. Observe that $w(v_0)=0$ then.
Thus suppose now that $t'=k-2$. Then we must have $t'=t$ and by our choice of $v_0$, $T$ must be a $(k-2)$-regular tree.
Therefore $\gr\not\cong(\zet_3)^p$ for all integers $p$, and hence there exist two distinct (nonzero) elements $a$ and $b$  such that $a+b=-a$.
Then take $0,a,b$ and $(t-3)/2$ pairs $(d_1,-d_1),$ $\ldots,$ $(d_{(t-3)/2},-d_{(t-3)/2})$ from the remaining elements of $\gr$ and  label all edges incident with $v_0$ with them. Observe that $w(v_0)=a+b=-a$ then.

Observe that  for any edge $e\in N_0$ 
we have $f(e)\neq w(v_0)$.
In each next step now we will label edges from the set $N_i$ only if the edge between $v_i$ and its parent $v^i$ is already labeled. We will do it in such a way that $f(e)\notin \{f(v^iv_i),0\}$ for any $e\in N_i$ and  $\sum_{e\in N_i}f(e)=0$ if $r_i>1$. Note that
then
any vertex $v\neq v_0$ with deg$(v)\neq2$ will have assigned a color equal to the label of the edge between $v$ and its parent.\\

Suppose first that $r_i$ is even. Then $r_i+3\leq k$ and one can easily see that we can pick $r_i/2$  pairs of all distinct elements  $(d_1,-d_1),$ $\ldots,$ $(d_{r_i/2},-d_{r_i/2})$ from $\gr$ not including $f(v^iv_i)$.
We thus label the edges of the set $N_i$ with these and we are done.

Assume now $r_i$ is odd. Then $r_i+4\leq k$. Recall that $f(v^iv_i)\neq 0$. For $r_i=1$ take any nonzero element $g$ from $\gr$ such that $g\notin\{f(v^iv_i),w(v^i)-f(v^i v_i)\}$ (we can do this because $|\gr|\geq 7$) and label the edge from $N_i$. Now, if $r_i\geq 3$, by Lemma~\ref{trzyzero} there exist two distinct nonzero elements $x$ and $y$ such that $-f(v^iv_i)\notin \{x,y\}$ and $x+y=-f(v^iv_i) $. Take $-f(v^iv_i),x,y$ and $(r_i-4)/2$ pairs $(d_1,-d_1),$ $\ldots,$ $(d_{(r_i-4)/2},-d_{(r_i-4)/2})$ from the remaining 
elements  of $\gr$ and arbitrarily label all edges from $N_i$ with these.
\end{proof}

A $\gr$-twin edge coloring of a graph $G$ is said to be \textit{nowhere-zero} if it uses no label $0$ on any edge of $G$.
Observe that by the proof above, if a tree $T$ has even maximum degree at least $6$, then $T$ has a nowhere-zero $\gr$-twin edge coloring for any Abelian group $\gr$ of odd order $|\gr|\geq\Delta(T)+3$. Moreover, if $T$ has odd maximum degree at least $5$, then it has a nowhere-zero $\gr$-twin edge coloring for any Abelian group $\gr$ of odd order $|\gr|\geq\Delta(T)+2$, except for the case when $T$ is a $(|\gr|-2)$-regular tree.

Thus we know that $\chi'_g(T)\leq \Delta(T)+3$ for all trees $T$ of maximum degree at least $5$ except the regular trees (for which $\chi'_g(T)\leq \Delta(T)+4$ holds). Before we show that such an upper bound is true for all trees, we present an infinite family of (regular) trees witnessing
that this bound cannot be in general improved.

\begin{myobservation}\label{sharp}
If $T$ is a regular tree of order at least $7$ such that $\Delta(T)=3^{2p+1}-2$ for some positive integer $p$,
then $\chi'_g(T)\geq \Delta(T)+3$.
\end{myobservation}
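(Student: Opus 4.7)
The plan is to show $\chi'_g(T)\geq\Delta(T)+3$ by exhibiting, for each $k\leq\Delta(T)+2$, an Abelian group $\gr$ of order $k$ for which $T$ admits no $\gr$-twin edge coloring. Writing $t=\Delta(T)=3^{2p+1}-2$, the cases $k<t$ are immediate from $\chi'(T)=t$, while for $k=t$ any group $\gr$ of order $t$ suffices: since $T$ is $t$-regular, any proper edge labeling must use all $t$ group elements at each non-leaf, so by Lemma~\ref{involution} and the fact that $t$ is odd, the weighted degree $w(v)=\sum_{g\in\gr}g=0$ at every non-leaf; this collides either across an edge between two non-leaves or, in the star case $T=K_{1,t}$, with the colour of the leaf incident with the edge labeled $0$. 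For $k=t+1$ I would take $\gr=\zet_{t+1}$ and use that $t+1=3^{2p+1}-1\equiv 2\pmod{4}$, so the unique involution $\iota=(t+1)/2$ is not the double of any element; the argument is then a one-element analogue of the main case below.

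The heart of the proof is the case $k=t+2$ with $\gr=(\zet_3)^{2p+1}$. This group has no involutions, so $\sum_{g\in\gr}g=0$ by Lemma~\ref{involution}, and every non-zero element satisfies $-2a=a$. Assume for contradiction that a $\gr$-twin edge coloring $f$ of $T$ exists. At each non-leaf $v$ of degree $t$, the set $M_v\subset\gr$ of labels missing among edges at $v$ has size $|\gr|-t=2$; write $M_v=\{a_v,b_v\}$ with $a_v\neq b_v$. Then $w(v)=\sum_{g\in\gr}g-a_v-b_v=-(a_v+b_v)$. The crucial algebraic step is to verify that $w(v)\notin M_v$: the identity $-(a_v+b_v)=a_v$ would force $b_v=-2a_v=a_v$, contradicting $a_v\neq b_v$, and the case $w(v)=b_v$ is symmetric. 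It is precisely here that the peculiarity of $(\zet_3)^{2p+1}$ is decisive.

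Once $w(v)\notin M_v$ is established, $w(v)$ must be the label on some edge $e_v$ incident to $v$. If $e_v=v\ell$ with $\ell$ a leaf, then $w(\ell)=f(e_v)=w(v)$, contradicting vertex-properness across $v\ell$; hence $e_v$ lies in the subtree $T'$ of $T$ induced by the non-leaves. If $T$ is a star, then $V(T')=\{v\}$ has no non-leaf neighbour and we already have a contradiction. Otherwise $T'$ is a tree with $n'\geq 2$ vertices and $n'-1$ edges, while the assignment $v\mapsto e_v$ furnishes $n'$ directed edges inside $T'$. By pigeonhole some undirected edge $uv$ of $T'$ is used in both directions, forcing $f(uv)=w(v)=w(u)$ and contradicting proper vertex coloring across $uv$. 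The main obstacle is the algebraic identity $w(v)\notin M_v$; once this is secured by the arithmetic of $(\zet_3)^{2p+1}$ (and by the congruence $t+1\equiv 2\pmod 4$ in the parallel case), the pigeonhole argument on $T'$ delivers the contradiction uniformly.
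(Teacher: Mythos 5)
Your proof is correct, and its crucial case ($k=\Delta(T)+2$ with $\gr=(\zet_3)^{2p+1}$, two missing elements at each internal vertex, and the identity $-(a+b)\notin\{a,b\}$ forcing $w(v)$ onto an incident edge) is exactly the paper's central idea. You diverge in two places. First, for the orders $k\leq\Delta(T)+1$ the paper simply cites Theorem~\ref{regular} (Johnston's result that $\chi'_t(T)=\Delta(T)+2$ for regular trees with $\Delta(T)\equiv 1\pmod 4$, combined with $\chi'_g\geq\chi'_t$), whereas you give a self-contained treatment: the orders below $\Delta(T)$ fall to $\chi'(T)=\Delta(T)$, the order $\Delta(T)$ falls to the all-elements-used/zero-sum argument, and the order $\Delta(T)+1$ falls to the "involution is not a double" argument in $\zet_{3^{2p+1}-1}$; this costs a bit more work but removes the dependence on an external theorem. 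Second, to derive the final contradiction the paper walks a non-backtracking path $v_0,v_1,v_2,\dots$ (each step determined by $w(v_i)=f(v_iv_{i+1})$, with $v_{i+1}\neq v_{i-1}$ because consecutive sums differ) until it must hit a leaf, while you instead apply pigeonhole to the map $v\mapsto e_v$ from the $n'$ non-leaves into the $n'-1$ edges of the subtree they induce, forcing some edge $uv$ with $f(uv)=w(u)=w(v)$. Both closings are valid; yours is arguably cleaner and avoids having to argue that the walk is a genuine path, and you also handle the star case explicitly, which the paper's walk argument covers implicitly via the "leaf neighbor" step.
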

\begin{proof} Observe that $\Delta(T)=3^{2p+1}-2\equiv 1 \pmod 4$. Therefore $\chi'_g(T)\geq \Delta(T)+2$ by Theorem~\ref{regular}.
Assume, to the contrary, that $\chi'_g(T)= \Delta(T)+2$. Let $f$ be a $\gr$-twin edge coloring  of $T$ with $\gr=(\zet_3)^{2p+1}$,
and let $v_0$ be an internal vertex of $T$, i.e.
such that deg$(v_0)=\Delta(T)$. Then there are
exactly two elements $a,b\in(\zet_3)^{2p+1}$ that are not assigned to any edge incident with $v_0$.  Since $w(v_0)=-(a+b)$ and it is impossible in  $(\zet_3)^{2p+1}$ that $-(a+b)=a$ or $-(a+b)=b$, we obtain that $w(v_0) = c_1 = f(v_0v_1)$ for some $v_1 \in  N (v_0)$, $c_1\notin\{a,b\}$.  If $v_1$ is a leaf of $T$, then $w(v_1)=w(v_0)$, a contradiction. Thus $v_1$ is not a leaf of $T$ and so deg$(v_1) =\Delta(T)$. Analogously as above, $w(v_1)=c_2=f(v_1v_2)$ for some $v_2 \in  N (v_1)$, and since $f$ distinguishes $v_1$ and $v_2$ by their corresponding sums, then $c_2\neq c_1$, and hence $v_2\neq v_0$. If $v_2$ is a leaf of $T$, we obtain a contradiction. Otherwise we continue this process, and since $T$ has no cycles and is finite, we eventually must 
reach
a leaf $v_r$ such that $v_r\in N(v_{r-1})$ and $w(v_r)=c_{r}=w(v_{r-1})$, a contradiction.
\end{proof}

\begin{mylemma} Let $T$ be a tree with   maximum degree $t\geq5$ such that  any vertex of degree $2$ has at least one neighbor of degree $2$ and let $\gr$ be an Abelian group with exactly one involution $\iota$,  $|\gr|\geq t+2$. Then there exists a $\gr$-twin edge coloring  of $T$.\label{inwolucja}
\end{mylemma}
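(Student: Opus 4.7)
The plan is to mimic the proof of Lemma~\ref{treeodd}, adapting it to the presence of the involution $\iota$. I would root $T$ at a vertex $v_0$ of maximum degree $t$ and process the vertices in BFS order top-down, maintaining the invariant that $w(v)=\alpha$ (the label of the parent edge of $v$) at every non-root internal vertex $v$ not lying in a degree-$2$ chain, while $w(v_0)=0$. Under this invariant the proper vertex coloring of $T$ follows automatically from the proper edge coloring at each internal vertex, so one only needs to verify at every step that labels realising the invariant exist.

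First I would label the $t$ edges incident to $v_0$ with pairwise distinct nonzero elements of $\gr$ summing to $0$. When $t$ is even this is achieved by $t/2$ zero-sum pairs $\{d,-d\}$ with $d\notin\{0,\iota\}$, and $(|\gr|-2)/2\geq t/2$ such pairs are available. When $t$ is odd I would combine a triple $\{a,x,y\}$ of distinct nonzero elements summing to $0$ (from Lemma~\ref{trzyzero} applied with $b=0$) with $(t-3)/2$ further pairs; in the tight sub-case $|\gr|=t+2$ I would instead take all of $\gr\setminus\{e_1,e_2\}$ for some distinct nonzero $e_1,e_2$ with $e_1+e_2=\iota$, which by Lemma~\ref{involution} gives sum~$0$.

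Second, for every non-root internal vertex $v_i$ of degree at least $3$ with parent-edge label $\alpha\neq 0$ and $r_i\geq 2$ children, I would label the children edges with $r_i$ distinct elements of $\gr\setminus\{\alpha\}$ summing to $0$. The same pair/triple scheme works whenever it fits; when it does not, i.e.\ $r_i=t-1$ and $\alpha\notin\{0,\iota\}$, the exclusion set $\gr\setminus\{\alpha,e_1,e_2\}$ with $e_1+e_2=\iota-\alpha$ supplies the required $r_i$ labels summing to $0$. Taking $e_1=0$ avoids the label $0$ on any child edge except in the exceptional subcase $2\alpha=\iota$; there the unavoidable $0$-label does no harm, because the child inheriting it falls into the non-tight regime (every pair $\{d,-d\}$ trivially lies in $\gr\setminus\{0\}$), so the construction proceeds without further exception.

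Third, for each maximal chain $w_1,\ldots,w_k$ of degree-$2$ vertices (with $k\geq 2$ by hypothesis) whose edge $w_0w_1$ has already been labelled $\alpha_0$, I would greedily choose $\alpha_1,\ldots,\alpha_k$ subject to: $\alpha_i\neq\alpha_{i-1}$ (proper edge coloring at $w_i$); $\alpha_i\neq\alpha_{i-2}$ for $i\geq 2$ (distinction $w(w_i)\neq w(w_{i-1})$); $\alpha_1\neq w(w_0)-\alpha_0$ (distinction at the upper boundary); and $\alpha_{k-1}\neq 0$ (distinction $w(w_k)\neq w(w_{k+1})$, where the invariant or the leaf rule forces $w(w_{k+1})=\alpha_k$). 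At each position at most three forbidden values appear, and $|\gr|\geq t+2\geq 7$ leaves room for a valid choice; the hypothesis $k\geq 2$ is used crucially to separate the two boundary constraints, which for $k=1$ would collide on the single variable $\alpha_1$. The main obstacle I anticipate is the tight-case analysis in the second step: verifying that the exclusion pair $(e_1,e_2)$ can always be chosen with the prescribed properties, and in particular that the forced $0$-label arising when $2\alpha=\iota$ propagates only to a child that can be handled by the easier non-tight scheme, so that the recursion terminates consistently.
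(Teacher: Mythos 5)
Your overall architecture --- root at a maximum-degree vertex, process the tree top-down, and make the child labels at each internal vertex sum to zero so that its weight equals its parent-edge label --- is exactly the paper's, and your chain analysis for degree-$2$ vertices is a sound repackaging of the paper's rule that the single child edge of a degree-$2$ vertex $v_i$ avoid $\{f(v^iv_i),\,w(v^i)-f(v^iv_i)\}$. The genuine problem lies in your insistence that $w(v_0)=0$ with all root labels nonzero, combined with a pair/triple count that does not survive the tightest orders. Since $\gr$ has an involution, $|\gr|$ is even; hence for odd $t$ the order $|\gr|=t+2$ that you single out as ``the tight sub-case'' cannot occur, while $|\gr|=t+3$ can, and there your generic scheme fails: a zero-sum triple $\{a,x,y\}$ of distinct nonzero elements supplied by Lemma~\ref{trzyzero} contains no mutually inverse pair, so if it misses $\iota$ it destroys three of the $(t+1)/2$ pairs $\{d,-d\}$ of $\gr\setminus\{0,\iota\}$, leaving only $(t-5)/2<(t-3)/2$ usable pairs. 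Your declared fallback $\gr\setminus\{e_1,e_2\}$ is no repair: it has $|\gr|-2$ elements (the wrong number when $|\gr|=t+3$), and it contains $0$, which together with $w(v_0)=0$ collides with the weight of a leaf child on the $0$-labelled edge. The same one-pair shortfall recurs at internal vertices: for $r_i=t-1$ odd with parent label $\alpha=0$ and $|\gr|=t+2$ --- a configuration you explicitly exclude from the exclusion-set treatment, asserting the pair/triple scheme ``fits'' --- the count is again short by one, so your claim that a forced $0$-label only ever lands on a child that the easier scheme handles is unjustified for children of odd down-degree.

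All of this is repairable, either by choosing the zero-sum triple to contain $\iota$ (so that it destroys only two pairs), or, as the paper does, by abandoning $w(v_0)=0$ when $t$ is odd: the paper labels the root edges with $0,a,b$, where $a,b$ are nonzero and $a+b=\iota$, plus $(t-3)/2$ pairs; this yields $w(v_0)=\iota$, destroys only the two pairs $\{a,-a\}$ and $\{b,-b\}$, and therefore closes exactly at $|\gr|=t+3$, while the label $0$ at the root is harmless because no root edge carries $\iota$. More generally the paper allows $0$ on any child edge of a vertex with at least two children, invoking the degree-$2$ hypothesis only to ensure that the parent edge of a degree-$2$ vertex whose child is a leaf is never labelled $0$. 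As written, however, your argument leaves the extremal orders uncovered, and those are precisely where the lemma has content.
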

\begin{proof}~
Set $k=|\gr|$.
We will define a $\gr$-{twin edge coloring} $f\colon E(T)\rightarrow \gr$.


Let $T$ be  rooted at a vertex $v_0$ of degree $t$. 
Let $v_1,v_2,\ldots,v_m$ be all the remaining  vertices of $T$ which are not leaves, and denote their corresponding numbers of children by $r_1,r_2,\ldots,r_m$.  Set $N_i=\{v_iw: w$ is a child of $v_i\}$ for $i=0,1,\ldots,m$.\\

If $t$ is even take $t/2$ distinct pairs $(d_1,-d_1),$ $\ldots,$ $(d_{t/2},-d_{t/2})$ of the  elements of $\gr$ and arbitrarily label all edges incident with $v_0$ with them. Then $w(v_0)=0$. If $t$ is odd, then $k>t+2\geq7$. Therefore, by Lemma~\ref{trzyzero}, in the group $\gr$ we have nonzero elements $a\neq b$ such that $a+b=\iota$.
Take $0,a,b$ and $(t-3)/2$ distinct pairs $(d_1,-d_1),$ $\ldots,$ $(d_{(t-3)/2},-d_{(t-3)/2})$ from the remaining elements of $\gr$ and  label all edges incident with $v_0$ with them. Observe that $v_0$ is assigned a color $w(v_0)=a+b=\iota$ then.

The main idea of the proof is similar to the proof of Lemma~\ref{treeodd} --
in each next step we will label edges from the set $N_i$ only if the edge between $v_i$ and its parent (say $v^i$) is already labeled.
This time however the label $0$ is allowed  for an edge, but only for edges belonging to a set $N_i$ with $r_i>1$. \\

Suppose first that $r_i$ is even. Then $r_i+4\leq k$ and one can easily see that we can pick $r_i/2$ distinct pairs  $(d_1,-d_1),$ $\ldots,$ $(d_{r_i/2},-d_{r_i/2})$ of  elements of $\gr$ not including $f(v^iv_i)$. We label  the edges in $N_i$ with these and we are done.

Assume now $r_i$ is odd, thus $r_i+3\leq k$.  For $r_i=1$, take any nonzero element $g$ from $\gr$ such that $g\notin\{f(v^iv_i),w(v^i)-f(v^iv_i)\}$ (we can do this because $|\gr|\geq 7$) and label the edge from $N_i$. Assume now that $r_i\geq 3$. If $f(v^iv_i)\neq 0$, take $0$ and $(r_i-1)/2$ distinct pairs $(d_1,-d_1),$ $\ldots,$ $(d_{(r_i-1)/2},-d_{(r_i-1)/2})$ from  elements of $\gr$ that are different from $f(v^iv_i),-f(v^iv_i)$ and arbitrarily label  the edges of the set $N_i$ with these elements. Suppose then that $f(v^iv_i)=0$. By Lemma~\ref{trzyzero}, there exist distinct nonzero elements $x,y\in\gr$ such that $x+y=\iota$. 
Thus take $\iota,x,y$ and $(r_i-3)/2$ distinct pairs $(d_1,-d_1),$ $\ldots,$ $(d_{(r_i-3)/2},-d_{(r_i-3)/2})$ from the remaining  
elements of $\gr$ and  label all edges from $N_i$ with them.
\end{proof}


The famous Catalan-Mih\v{a}ilescu Theorem says  that the only solution in the natural numbers of the equation $x^a - y^b = 1$ for $a, b > 1$, $x, y > 0$ is $x = 3$, $a = 2$, $y = 2$, $b = 3$ \cite{Mih}. Therefore $\chi'_g(K_{1,2^{p}-3})\leq2^{p}-1$ for $p\geq3$ by Lemma~\ref{treeodd}.
However the tree $K_{1,2^{p}-3}$ does not have a $(\zet_2)^p$-twin edge coloring.
Indeed, for suppose 
we are able to label $K_{1,2^{p}-3}$ appropriately with elements from $(\zet_2)^p$.
In such a situation we
would have to use $2^{p}-3$ distinct elements of $(\zet_2)^p$ on the edges, which would leave us
three distinct elements, $g_1,g_2,g_3$ unused. The weighted degree of the central vertex would
be $-(g_1+g_2+g_3)$. This should be distinct from all other weighted degrees, so one of the
equalities $-(g_1+g_2+g_3)=g_1$, $-(g_1+g_2+g_3)=g_2$ or  $-(g_1+g_2+g_3)=g_3$ would have to be satisfied. In all cases it
follows that $g_i=g_j$ for  $i\neq j$, $i,j\in\{1,2,3\}$, a contradiction. Moreover we could extend this arguments similarly as in the proof of  Observation~\ref{sharp} to the case of any $(2^{p}-3)$-regular tree. However, for a group $\gr$ having more than one involution we are able to prove the following.

\begin{mylemma} Let $T$ be a tree with   maximum degree  $t\geq5$ such that  any vertex of degree $2$ has at least one neighbor of degree $2$ and let $\gr$ be an Abelian group  of order $k\geq t+2$ with more than one involution. Let $p$ be an integer such that $p\in\{\log_2(t+2),\log_2(t+3)\}$. If  $\gr \not \cong (\zet_2)^p$, then there exists a $\gr$-twin edge coloring  of $T$.\label{inwolucje}
\end{mylemma}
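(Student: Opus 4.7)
The plan is to follow the architecture of the proofs of Lemma~\ref{treeodd} and Lemma~\ref{inwolucja}: root $T$ at a vertex $v_0$ of maximum degree $t$, label the $t$ edges incident with $v_0$ so as to fix a convenient value of $w(v_0)$, and then extend $f$ in BFS order in such a way that $\sum_{e\in N_i}f(e)=0$ when $r_i\geq 2$ (forcing $w(v_i)=f(v^iv_i)$) while no label on $N_i$ coincides with $f(v^iv_i)$. Let $I$ denote the subgroup of $\gr$ consisting of the elements of order dividing $2$; the assumption that $\gr$ has more than one involution gives $|I|=2^k$ with $k\geq 2$. Non-involution elements of $\gr$ pair up as $(g,-g)$ with $g\neq -g$ and are distributed freely, while involutions are allocated using Lemma~\ref{involutions} or, crucially, its refinement Lemma~\ref{involutions2}.

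First I would label the $t$ edges at $v_0$ by combining $(t-a)/2$ pairs of distinct non-involutions with a set $A_1\subseteq I$ of $a$ elements (involutions plus possibly $0$) whose sum is $0$, where $a$ has the same parity as $t$ and the pair $(a,2^k-a)$ avoids the forbidden sizes $\{2,2^k-2\}$ required by Lemma~\ref{involutions2}. For even $t$ this yields $w(v_0)=0$; for odd $t$ one of the pairs is replaced by a triple (for instance $\{0,x,y\}$ with $x+y$ a chosen involution via Lemma~\ref{trzyzero}, or three distinct involutions $\iota_1,\iota_2,\iota_1+\iota_2$ summing to $0$), and $w(v_0)$ is fixed to $0$ or to a prescribed involution.

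The greedy step uses the same palette. For each non-root internal $v_i$ with $r_i\geq 2$ and $f(v^iv_i)$ already chosen, label $N_i$ so that $\sum_{e\in N_i}f(e)=0$ and no label equals $f(v^iv_i)$; the label $0$ is now admissible, which relaxes the restriction we faced in the odd-order setting of Lemma~\ref{treeodd}. The strengthening in Lemma~\ref{involutions2} is exactly what makes this work: it lets us force the involution $f(v^iv_i)$ (should it happen to be one) out of the block $A_1$ of involutions actually used, thereby resolving the conflict with the parent edge. For $r_i=1$ I pick $g$ nonzero with $g\notin\{f(v^iv_i),\,w(v^i)-f(v^iv_i)\}$, available since $|\gr|\geq t+2\geq 7$. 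The structural hypothesis that every degree-$2$ vertex has a degree-$2$ neighbor is used as in Lemma~\ref{inwolucja} to prevent a single-edge extension at a degree-$2$ vertex from having to reconcile two already-prescribed weighted degrees in the BFS sweep.

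The chief obstacle lies in the tight regime $|\gr|\in\{t+2,t+3\}$, where nearly every element of $\gr$ must be consumed at $v_0$ and $w(v_0)$ is essentially the negative of the two or three unused elements. The hypothesis $\gr\not\cong(\zet_2)^p$ for $p\in\{\log_2(t+2),\log_2(t+3)\}$ is invoked precisely here: without it, every nonzero element of $\gr$ would be an involution, the non-involution pair mechanism would vanish, and the only candidate partition of $I$ demanded by Lemma~\ref{involutions2} would land on a forbidden size in $\{2,2^k-2\}$, reproducing exactly the obstruction exhibited by $K_{1,2^p-3}$ in the paragraph immediately preceding the statement. In every remaining case either $|\gr|\geq t+4$ supplies a spare non-involution pair beyond the involution partition, or the mere presence of one non-involution in $\gr\setminus I$ restores the requisite slack, and the BFS construction completes successfully.
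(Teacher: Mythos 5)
Your architecture is the same as the paper's (root at a vertex of degree $t$, force $\sum_{e\in N_i}f(e)=0$ so that $w(v_i)=f(v^iv_i)$, allocate involutions via Lemmas~\ref{involutions} and~\ref{involutions2} and non-involutions in $(d,-d)$ pairs, and use the degree-$2$ hypothesis to keep the parent edge of a degree-$2$ vertex nonzero). However, the proposal defers exactly the part where all the work lies, and at least one of its concrete claims is false.

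First, the claim that for even $t$ the root labeling ``yields $w(v_0)=0$'' fails in the tight case $k=t+2$. There you must use $t=k-2$ distinct labels at $v_0$, none equal to $w(v_0)$ on a leaf edge; if you insist on $w(v_0)=0$ you must avoid the label $0$, but any $t$-subset of $\gr\setminus\{0\}$ sums to the negative of the single omitted nonzero element, which is $0$ only if that element is $0$ --- a contradiction. The paper's fix is to omit $0$ and one involution $\iota_1$, accepting $w(v_0)=\iota_1$ (safe because $\iota_1$ is unused on $N_0$, by Lemma~\ref{involution}); your scheme of ``a zero-sum subset of $I$ plus non-involution pairs'' cannot produce this, since a zero-sum subset of $I$ of size $2^p-2$ does not exist. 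Second, your assertion that one can always choose $a$ so that $(a,2^p-a)$ avoids the forbidden sizes $\{2,2^p-2\}$ of Lemma~\ref{involutions2} is precisely what breaks down at the boundary values $t=2^p-3$, $r_i=2^p-3$, $r_i=2^p-2$ and $r_i\ge 2^p-1$; the paper resolves each of these separately using the structural dichotomy that a group with $2^p-1$ involutions has order either exactly $2^p$ or at least $2^{p+1}$, which supplies the extra non-involution pairs needed to shrink $|A_1|$ to an admissible size. Your sketch gestures at this (``either $|\gr|\ge t+4$ supplies a spare pair \dots or the mere presence of one non-involution restores the requisite slack'') but never verifies it, and since the entire content of the lemma is this case analysis, the proof as written is incomplete.
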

\begin{proof}
Let $\gr$ be an Abelian group of order $k$ with involution set $I^* =\{\iota_1,\iota_2,\ldots,\iota_{2^p-1}\}$, $p >1$. Let $I=I^* \cup\{0\}$. Note that $\gr$ has  even order. Since the group $\gr$  can be expressed as the direct product of cyclic subgroups of prime-power orders one can easily see that either $k=2^p$ or $k>2^{p+1}-1$.  We will define a $\gr$-{twin edge coloring} $f\colon E(T)\rightarrow \gr$.

As before let $T$ be a rooted tree with  root $v_0$ such that $\deg(v_0)=\Delta(T)=t$. 
Let $v_1,v_2,\ldots,v_m$ be all the remaining  vertices of $T$ which are not leaves, and denote their corresponding numbers of children by $r_1,r_2,\ldots,r_m$. Let $N_i=\{v_iw: w$ is a child of $v_i\}$ for $i=0,1,\ldots,m$.\\

If $t$ is even and $k= t+2$, then  from the assumption that $\gr \not \cong (\zet_2)^p$ we deduce that $t>2^{p+1}-3$. Take $\iota_2,\iota_3,\ldots,\iota_{2^p-1}$ and $(t+2-2^p)/2$ distinct pairs $(d_1,-d_1),$ $\ldots,$ $(d_{(t+2-2^p)/2},-d_{(t+2-2^p)/2})$ from  elements of $\gr$ and arbitrarily label all edges incident with $v_0$ with these. Observe that then $w(v_0)=\iota_1$ by Lemma~\ref{involution}.
Suppose $t$ is even and $k\geq t+4$. If now $t<2^p-2$  then there exists a partition $A=\{A_1, A_2, A_3\}$ of $I$ such that $|A_1|=t\geq6$, $|A_2|=1$, $|A_3|=2^p-1-t$ and $\sum_{a\in A_i}a=0$ for $i\in\{1,2,3\}$ by Lemma~\ref{involutions}.
Note that $0\notin A_1$. Label all edges incident with $v_0$ with the elements of $A_1$, hence $w(v_0)=0$.
If now $t\geq 2^p-2$ then obviously $k>2^{p+1}-1$. Thus by Lemma~\ref{involutions}, there exists a partition $A=\{A_1, A_2, A_3\}$ of $I$ such that $|A_1|=2^{p}-4$, $|A_2|=1$, $ |A_3|=3$ and $\sum_{a\in A_i}a=0$ for $i\in\{1,2,3\}$. We take elements from $A_1$ and $(t+4-2^p)/2$ distinct pairs $(d_1,-d_1),$ $\ldots,$ $(d_{(t+4-2^p)/2},-d_{(t+4-2^p)/2})$ from the  elements of $\gr$ and label all edges incident with $v_0$ with these. Observe that then $w(v_0)=0$.

Assume now that $t$ is odd. If $t\geq2^{p}-1$
, then take $\iota_1,\iota_2,\ldots,\iota_{2^p-1}$ and $(t+1-2^p)/2$ distinct pairs $(d_1,-d_1),$ $\ldots,$ $(d_{(t+1-2^p)/2},-d_{(t+1-2^p)/2})$ from the  elements of $\gr$ and arbitrarily label all edges incident with $v_0$ with them. Observe that then $w(v_0)=0$.
If  $t<2^p-3$   then there exists a partition $A=\{A_1, A_2,A_3\}$ of $I$ such that $|A_1|=t\geq 5$, $|A_2|=1$,  $|A_3|=2^p-1-t\geq4$ and $\sum_{a\in A_i}a=0$ for $i\in\{1,2,3\}$ by Lemma~\ref{involutions}.
Label all edges incident with $v_0$ by the elements of $A_1$, thus $w(v_0)=0$. Finally suppose that $2^p-3= t$. By the assumption $\gr \not \cong (\zet_2)^p$, this implies that $k>2^p$. Thus  by Lemma~\ref{involutions} there exists a partition $A=\{A_1, A_2,A_3\}$ of $I$ such that $|A_1|=t-2\geq3$, $|A_2|=1$,  $|A_3|=2^p+1-t$ and $\sum_{a\in A_i}a=0$ for $i\in\{1,2,3\}$. Take elements from $A_1$ and one pair $(d_1,-d_1)$  from the  elements of $\gr$ and  label all edges incident with $v_0$ with these elements. Observe that then $w(v_0)=0$.

The main idea of the further part of the proof is the same as in the proof of Lemma~\ref{inwolucja}.
In each next step we will label edges from the set $N_i$ only if the edge between $v_i$ and its parent (say $v^i$) is already labeled.

Suppose first that $r_i$ is odd, thus $r_i+3\leq k$. If $r_i=1$ then we are taking any non-zero element $g$ from $\gr$ such that $g \notin \{f(v^iv_i),w(v^i)-f(v^iv_i)\}$ (we can do this because $|\gr|\geq 7$) and label the edge in $N_i$ with it. Let $r_i\geq3$. Suppose that $f(v^iv_i)\notin I^*$. Then  for $r_i<2^p-3$  there exists a partition $A=\{A_1, A_2,A_3\}$ of $I$ such that $|A_1|=r_i$, $|A_2|=1$,  $|A_3|=2^p-1-r_i$ and $\sum_{a\in A_j}a=0$ for $j\in\{1,2,3\}$ by Lemma~\ref{involutions}. Label all edges from $N_i$ by the elements of $A_1$. If $2^p-3= r_i$, then $t\geq k+2$ and $\gr\not \cong (\zet_2)^p$ imply that $k>2^p$ and $r_i\geq5$. Therefore  there exists a partition $A=\{A_1, A_2,A_3\}$ of $I$ such that $|A_1|=r_i-2$, $|A_2|=1$,  $|A_3|=2^p+1-r_i$ and $\sum_{a\in A_j}a=0$ for $j\in\{1,2,3\}$ by Lemma~\ref{involutions}. Take elements from $A_1$ and one pair $(d_1,-d_1)$ such that $f(v^iv_i)\notin \{d_1,-d_1\}$  from the  elements of $\gr$ and  label all edges from $N_i$ with these. For $r_i\geq2^p-1$ take $\iota_1,\iota_2,\ldots,\iota_{2^p-1}$ and $(r_i+1-2^p)/2$ pairs $(d_1,-d_1),$ $\ldots,$ $(d_{(r_i+1-2^p)/2},-d_{(r_i+1-2^p)/2})$ from the  elements of $\gr$  that are different from $f(v^iv_i)$ and  label  the edges of the set $N_i$ with them. Assume now $f(v^iv_i)\in I^*$. Then  for $r_i<2^p-1$    there exists a partition $A=\{A_1, A_2\}$ of $I$ such that $|A_1|=r_i$,   $|A_2|=2^p-r_i$,  $\sum_{a\in A_j}a=0$ for $j\in\{1,2\}$ and $f(v^iv_i)\notin A_1$  by Lemma~\ref{involutions2}. Label all edges from $N_i$ by the elements of $A_1$ then. For $r_i\geq2^p-1$   there exists a partition $A=\{A_1, A_2\}$ of $I$ such that $|A_1|=2^p-3$,   $|A_2|=3$,  $\sum_{a\in A_j}a=0$ for $j\in\{1,2\}$ and $f(v^iv_i)\notin A_1$  by Lemma~\ref{involutions2}. Recall that for $2^p-1\leq r_i$ we have $k>2^{p+1}-1$.  Label all edges from $N_i$ by the elements of $A_1$ and $(r_i+3-2^p)/2$ distinct pairs $(d_1,-d_1),$ $\ldots,$ $(d_{(r_i+3-2^p)/2},-d_{(r_i+3-2^p)/2})$ from the  elements of $\gr$.

Assume that $r_i$ is even, hence $k\geq r_i+4$.
If $r_i=2$ and $\gr \not\cong (\zet_2)^p$, take any $(d_1,-d_1)$ such that $f(v^iv_i)\notin \{d_1,-d_1\}$ to label the two edges in $N_i$ and we are done. For   $\gr \cong (\zet_2)^p$ one can see that we are able to take $g_1,g_2\in \gr$ such that $g_1\neq g_2$, $f(v^iv_i)\not\in\{g_1,g_2\}$ and $g_1+g_2\neq w(v^i)-f(v^iv_i)$. Label the edges from $N_i$ by $g_1$ and $g_2$. Let $r_i\geq4$.   
Assume first that $r_i\geq 2^p$. If $f(v^iv_i)\notin I$, take all elements of $I$
and $(r_i-2^p)/2$ distinct pairs $(d_1,-d_1),$ $\ldots,$ $(d_{(r_i-2^p)/2},-d_{(r_i-2^p)/2})$ from the  elements of $\gr$ not including $f(v^iv_i)$ and  label all edges from $N_i$ with them.
If on the other hand $f(v^iv_i)\in I$ then there exists a partition $A=\{A_1, A_2\}$ of $I$ such that $|A_1|=2^{p}-4$,   $|A_2|=4$,  $\sum_{a\in A_j}a=0$ for $j\in\{1,2\}$ and $f(v^iv_i)\notin A_1$ (it is trivial for $p=2$, while otherwise: if $f(v^iv_i)\in I^*$ it follows directly by Lemma~\ref{involutions2}, and if $f(v^iv_i)=0$ then it is sufficient to apply Lemma~\ref{involutions} to obtain appropriate sets $A'_1,A'_2,A'_3$ with $|A'_1|=2^{p}-4, |A'_2|=1, |A'_3|=3$ and set $A_1=A'_1$, $A_2=A'_2\cup A'_3$). We take elements from $A_1$ and $(r_i+4-2^p)/2$ distinct pairs $(d_1,-d_1),$ $\ldots,$ $(d_{(r_i+4-2^p)/2},-d_{(r_i+4-2^p)/2})$ from the  elements of $\gr$  and  label all edges from $N_i$ with them.
Now if $r_i= 2^p-2$, then $k\geq 2^{p+1}$, and hence we may take
$r_i/2$ distinct pairs $(d_1,-d_1),$ $\ldots,$ $(d_{r_i/2},-d_{r_i/2})$ from the  elements of $\gr$ not including $f(v^iv_i)$ and  label all edges from $N_i$ with these. Finally, if $r_i\leq 2^p-4$, then similarly as above, by Lemma~\ref{involutions2} or~\ref{involutions}
there exists a partition $A=\{A_1, A_2\}$ of $I$ such that $|A_1|=r_i$,   $|A_2|=2^p-r_i$,  $\sum_{a\in A_j}a=0$ for $j\in\{1,2\}$ and $f(v^iv_i)\notin A_1$. We then label the edges in $N_i$ with all elements from $A_1$.
\end{proof}\\

Observe that if $T$ is a tree with  maximum degree $\Delta(T)=2^p-3\geq5$
such that  any vertex of degree $2$ has at least one neighbor of degree $2$ that is not isomorphic to a  $(2^p-3)$-regular tree then using exactly the same method as in the proof of Lemma~\ref{inwolucje} for the root  of degree $r\not\in \{1, 2^p-3\}$ we obtain that $T$ has a $(\zet_2)^p$-twin edge coloring.

By 
Theorem~\ref{drzewa} and
Lemmas~\ref{inwolucja} and \ref{inwolucje} we deduce the following.

\begin{myobservation} Let $T$ be a tree with   even maximum degree such that  any vertex of degree $2$ has at least one neighbor of degree $2$. If  $\Delta(T)\neq 2^p-2$ for every integer $p$, then $\chi'_g(T)\leq \Delta(T)+2$. \qed
\end{myobservation}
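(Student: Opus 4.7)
The plan is to set $t=\Delta(T)$ and $k=t+2$, and to verify that for every Abelian group $\gr$ of order $k$ there exists a $\gr$-twin edge coloring of $T$; this is precisely the meaning of $\chi'_g(T)\le k$. The parity hypothesis together with the exclusion $t\neq 2^p-2$ rules out $t\in\{2,6,14,30,\ldots\}$, leaving the two regimes $t=4$ and $t\ge 8$. The plan is to dispatch $t=4$ by appealing to Theorem~\ref{drzewa}, and to handle $t\ge 8$ by splitting on the number of involutions of $\gr$ in order to apply either Lemma~\ref{inwolucja} or Lemma~\ref{inwolucje}.

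In the case $t=4$ one has $k=6$, and the only Abelian group of order $6$ is $\zet_6$. Theorem~\ref{drzewa} directly delivers a twin edge $(\Delta(T)+2)$-coloring, which by definition is a $\zet_6$-twin edge coloring, so this case is immediate.

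For $t\ge 8$ I would fix an arbitrary Abelian group $\gr$ of order $k=t+2$. Since $k$ is even, $\gr$ has at least one involution. If $\gr$ has exactly one, Lemma~\ref{inwolucja} applies (its hypotheses $t\ge 5$ and $|\gr|\ge t+2$ both hold) and yields the desired coloring. Otherwise $\gr$ has more than one involution, and I would invoke Lemma~\ref{inwolucje}; the hypothesis to check is that $\gr\not\cong(\zet_2)^p$ for any integer $p\in\{\log_2(t+2),\log_2(t+3)\}$. Here the assumption $t\neq 2^p-2$ is what I would use: it forces $t+2$ not to be a power of $2$, while $t+3$ is odd and exceeds $1$, so $t+3$ is not a power of $2$ either. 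Consequently the set $\{\log_2(t+2),\log_2(t+3)\}$ contains no integer, the excluded isomorphism cannot occur, and Lemma~\ref{inwolucje} produces the required $\gr$-twin edge coloring.

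The one thing to take care of is the bookkeeping around the numerical ranges of the invoked results, which is where I would expect any mistake to slip in. Theorem~\ref{drzewa} is available only for $\Delta(T)\le 6$, whereas Lemmas~\ref{inwolucja} and \ref{inwolucje} require $\Delta(T)\ge 5$; the awkward overlap value $\Delta(T)=6=2^3-2$ is exactly the one excluded by hypothesis, so the two regimes match up cleanly at $t=4$ versus $t\ge 8$, with no tree left uncovered.
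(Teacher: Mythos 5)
Your proof is correct and takes essentially the same route as the paper, which deduces the observation from exactly the same three ingredients: Theorem~\ref{drzewa} for $\Delta(T)=4$ (where the only Abelian group of order $6$ is $\zet_6$), and Lemmas~\ref{inwolucja} and~\ref{inwolucje} for $\Delta(T)\geq 8$ according to whether $\gr$ has one or more involutions. Your observation that the hypothesis $\Delta(T)\neq 2^p-2$ is used precisely to rule out $\gr\cong(\zet_2)^p$ (since $t+2$ is then not a power of two and $t+3$ is odd), and that it conveniently excludes the boundary value $\Delta(T)=6$, is exactly the bookkeeping the paper leaves implicit.
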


Finally, we obtain the following upper bound, which is tight according to Observation~\ref{sharp}.

\begin{mytheorem}
If $T$ is a tree of order at least $3$ then $\chi'_g(T)\leq \Delta(T)+2$ if $\Delta(T)$ is odd and $T$ is not $(3^{p}-2)$-regular for some integer $p\geq2$ and $\chi'_g(T)\leq \Delta(T)+3$ otherwise.\label{drzewaNEW}
\end{mytheorem}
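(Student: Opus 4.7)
The plan is to prove the claimed bound by a case analysis on the parity of $\Delta(T)$ and on whether $T$ is $(3^p-2)$-regular, appealing in each case to the most suitable of the lemmas already established. For every Abelian group $\gr$ of the target order $k \in \{\Delta(T)+2,\Delta(T)+3\}$ I will produce a $\gr$-twin edge coloring of $T$.

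First, suppose $\Delta(T) \geq 5$ is odd and $T$ is not $(3^p-2)$-regular for any $p \geq 2$. Set $k = \Delta(T)+2$; then $k$ is odd and $k \geq \max\{7,\Delta(T)+2\}$. Every Abelian group of order $k$ has odd order, and Lemma~\ref{treeodd} immediately yields a $\gr$-twin edge coloring, its exceptional clause being excluded by the hypothesis on $T$. Second, suppose $\Delta(T) \geq 4$ is even, and set $k = \Delta(T)+3$; again $k$ is odd with $k \geq \max\{7,\Delta(T)+2\}$, so Lemma~\ref{treeodd} applies, and its exception cannot occur since $3^p-2$ is odd while $\Delta(T)$ is even.

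The principal case is $\Delta(T) \geq 5$ odd with $T$ being $(3^p-2)$-regular for some $p \geq 2$; set $k = \Delta(T)+3 = 3^p+1$, which is even. Since $T$ is regular with $\Delta(T) \geq 7$, no vertex of $T$ has degree $2$, so the hypothesis on degree-$2$ vertices in Lemmas~\ref{inwolucja} and~\ref{inwolucje} is satisfied vacuously. For an Abelian group $\gr$ of order $k$ with exactly one involution, Lemma~\ref{inwolucja} yields the required coloring. For such $\gr$ with more than one involution, Lemma~\ref{inwolucje} applies provided $\gr \not\cong (\zet_2)^q$ with $2^q \in \{3^p,\,3^p+1\}$: the first alternative is impossible because $3^p$ is odd, while by the Catalan--Mihailescu Theorem the equation $2^q = 3^p+1$ has no solution with $p \geq 2$. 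These possibilities exhaust the Abelian groups of order $k$.

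Finally, for $\Delta(T) \in \{2,3\}$ we treat the small cases directly. When $\Delta(T)=3$, the only Abelian group of order $\Delta(T)+2=5$ is $\zet_5$, and Theorem~\ref{drzewa} supplies a twin edge $5$-coloring, which is the desired $\zet_5$-twin edge coloring. When $\Delta(T)=2$, $T$ is a path of order at least $3$, and a short direct construction produces a $\zet_5$-twin edge coloring. The subtlety that requires the most care is the third case: one must verify via the Catalan--Mihailescu Theorem that no Abelian group of order $3^p+1$ with $p \geq 2$ is isomorphic to an elementary abelian $2$-group, so that the exceptional case of Lemma~\ref{inwolucje} is avoided.
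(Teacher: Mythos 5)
Your proof is correct and follows essentially the same route as the paper: reduce to Lemma~\ref{treeodd} whenever an odd-order group of the right size is available, and handle the remaining $(3^{p}-2)$-regular case with groups of order $3^{p}+1$ via Lemmas~\ref{inwolucja} and~\ref{inwolucje}, using the Catalan--Mih\u{a}ilescu Theorem to exclude the elementary abelian $2$-group exception. The only (harmless) divergence is at small maximum degree, where the paper invokes Theorem~\ref{drzewa} together with the fact that all groups of the relevant orders are cyclic, while you use Lemma~\ref{treeodd} for $\Delta\in\{4,5\}$ and a direct $\zet_5$-labeling for paths.
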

\begin{proof}
Observe that for $k\in\{3,5,6,7\}$ all groups of order $k$ are isomorphic to $\zet_k$, and therefore every tree $T$ having maximum degree at most $5$ has $\chi'_g(T)\leq \Delta(T)+2$ by Theorem~\ref{drzewa}. Thus we may assume that $\Delta(T)\geq6$. For $T$ not being regular 
we are then done by Lemma~\ref{treeodd}. 
Assume now 
$T$ is a regular tree. If $\Delta(T)\neq 3^{p}-2$ for every integer $p$ then we are again done by Lemma~\ref{treeodd}. 
Suppose then that $\Delta(T)= 3^{p}-2$ and 
$\gr$ is a group of order 
$3^{p }+1$.
Then
$\Delta(T)+3$ is even but cannot be equal to $2^r$ for any natural number
$r$ by the Catalan-Mih\v{a}ilescu Theorem  \cite{Mih}, and therefore the existence of
a $\gr$-twin edge coloring of $T$ follows
by~Lemmas~\ref{inwolucja} and \ref{inwolucje}.
\end{proof}

\section{General upper bound}

Recall that for a given graph $G$ by ${\rm col}(G)$ we denote its coloring number, that is the least integer $k$ such that each subgraph of $G$ has minimum degree less than $k$.
Equivalently, it is the smallest $k$ for which we may linearly order all vertices of $G$ into a sequence $v_1,v_2,\ldots,v_n$ so that every vertex $v_i$ has at most $k-1$ neighbors preceding it in the sequence.
Hence ${\rm col}(G)\leq \Delta(G)+1$.
Note that ${\rm col}(G)$ equals the degeneracy of $G$ plus $1$, and thus the result below may 
be formulated 
in terms of either of the two graph invariants.
\begin{mytheorem}\label{UpperBoundTh}
If $G$ is a connected graph of order at least $3$ then $\chi'_g(G)\leq 2(\Delta(G)+{\rm col}(G))-5$.
\end{mytheorem}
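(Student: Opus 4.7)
The plan is to dispose of the tree case via Theorem~\ref{drzewaNEW} and, in the non-tree case, to label the edges greedily along a reverse degeneracy ordering.

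If $G$ is a tree, then $\mathrm{col}(G)=2$ and the claimed bound is $2\Delta(G)-1$. Theorem~\ref{drzewaNEW} gives $\chi'_g(G)\le\Delta(G)+3$, which is at most $2\Delta(G)-1$ whenever $\Delta(G)\ge 4$. For the two remaining subcases $\Delta(G)\in\{2,3\}$, Theorem~\ref{drzewaNEW} provides the required bound directly (paths admit a $\zet_3$-twin edge coloring, and trees of maximum degree $3$ are not $(3^p-2)$-regular for any $p\ge 2$, whence $\chi'_g\le\Delta+2=5$).

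Now assume $G$ is not a tree, set $k=2(\Delta(G)+\mathrm{col}(G))-5$, and fix an arbitrary Abelian group $\gr$ of order $k$. By the definition of the coloring number, we may linearly order $V(G)$ as $v_1,\ldots,v_n$ so that every vertex $v_i$ has at most $\mathrm{col}(G)-1$ neighbors among its predecessors; call those \emph{back-neighbors} and write $r_i$ for their number. The algorithm processes the vertices in the reverse order $v_n,v_{n-1},\ldots,v_1$, and at step $i$ labels the $r_i$ back-edges $v_iv_{j_1},\ldots,v_iv_{j_{r_i}}$. All other edges incident to $v_i$ have been labeled at earlier steps (corresponding to the forward-neighbors of $v_i$), so the weighted degree $w(v_i)$ becomes final at step~$i$.

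At step $i$, each new label $x_k$ must avoid (a) the at most $\deg(v_i)-1$ labels already present at $v_i$ and the at most $\deg(v_{j_k})-1$ labels already placed at $v_{j_k}$, i.e.\ at most $2\Delta(G)-2$ forbidden values per edge; and additionally the total sum $w(v_i)=\alpha_i+\sum_k x_k$ (where $\alpha_i$ is the partial sum of labels already at $v_i$) must differ from (b) the at most $\deg(v_i)-r_i$ already-finalized weighted degrees of forward-neighbors of $v_i$. Out of the $r_i$ slots, I intend to use $r_i-1$ of them to satisfy (a) only, reserving the last slot to simultaneously enforce (a) and (b); when $r_i\ge 2$ I will employ $(d,-d)$ pairs of distinct nonzero elements with zero sum together with suitable ``tuning'' elements --- exactly as in the proofs of Lemmas~\ref{trzyzero}, \ref{involution}, and \ref{involutions2} --- to obtain a construction valid uniformly across all group types $\gr$. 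A careful count that uses $\mathrm{col}(G)-1$ (rather than $\Delta(G)$) to bound the number of back-neighbors contributing structural forbidden values should then show the budget $k=2\Delta(G)+2\mathrm{col}(G)-5$ suffices.

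The main obstacle will be the vertices with $r_i\in\{0,1\}$, where the single (or no) back-edge has to carry both the proper-coloring and sum-distinguishing constraints at once. Here I exploit the non-tree hypothesis: $G$ contains a cycle, so the ordering can be arranged so that $r_i\ge 1$ for every $i\ge 2$ and that $v_1$ itself lies on a cycle, furnishing enough flexibility at the penultimate step to guarantee that $w(v_1)$ differs from the weighted degrees of its neighbors. This matches the remark in the introduction that the algorithm is ``efficient for all connected graphs except possibly some trees''.
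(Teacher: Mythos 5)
Your reduction of the tree case to Theorem~\ref{drzewaNEW} (and to the $\zet_3$-colorability of paths) is fine and matches what the paper does. The gap is in the non-tree case, and it is caused by your choice to process the vertices in \emph{reverse} degeneracy order so that $w(v_i)$ is finalized at step $i$. With that order, when you label a back-edge $v_iv_{j_k}$, \emph{both} endpoints may already carry up to $\Delta(G)-1$ labeled edges: $v_i$ has all $\deg(v_i)-r_i$ of its forward edges labeled plus the back-edges treated earlier in the same step, and $v_{j_k}$ may have up to $\deg(v_{j_k})-1$ forward edges already labeled (nothing forces $v_{j_k}$ to have many \emph{backward} neighbors, so $\mathrm{col}(G)$ does not bound this quantity). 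Properness alone therefore forbids up to $2\Delta(G)-2$ labels per edge, and adding the up to $\Delta(G)-r_i$ sums of the already-finalized forward neighbors of $v_i$ gives up to $3\Delta(G)-3$ forbidden values for the last slot. Since $3\Delta(G)-3\geq 2(\Delta(G)+\mathrm{col}(G))-5$ whenever $\Delta(G)\geq 2\,\mathrm{col}(G)-2$ (e.g.\ planar graphs of large maximum degree), the budget $|\gr|=2(\Delta(G)+\mathrm{col}(G))-5$ is not enough for your count, and no ``careful count using $\mathrm{col}(G)-1$'' repairs this: the offending $2\Delta-2$ term comes from properness at the two endpoints, not from the number of back-neighbors.

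The paper avoids exactly this by running the stages \emph{forward}: at stage $i$ it labels the backward edges of $v_i$, at which moment $v_i$ has at most $\mathrm{col}(G)-2$ already-labeled incident edges, so properness costs only $\Delta(G)+\mathrm{col}(G)-3$ per edge and the sum constraints cost at most $(\Delta(G)-1)+(\mathrm{col}(G)-2)$ more, fitting inside the group. The price of the forward order is that sums are not final when a vertex is processed, which the paper handles with explicit invariants (a nonzero-sum condition for vertices that still await a pendant neighbor, and choosing the label of the second-to-last backward edge of $v_i$ so as to pre-separate $w(v_{i_b})$ from $w(v_i)$, since the last backward edge shifts both sums equally). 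Two further points: your appeal to Lemmas~\ref{trzyzero} and~\ref{involutions2} and to $(d,-d)$ pairs is misplaced here --- a greedy argument over an arbitrary Abelian group needs only that $|\gr|$ exceeds the number of forbidden elements, and the paper's proof of this theorem uses no group-structural lemmas at all; and your treatment of the vertices with $r_i\in\{0,1\}$ (in particular $v_1$, which necessarily has $r_1=0$ and whose sum you can never adjust) is only a declaration of intent, whereas the same difficulty is precisely what the paper's conditions ($2^\circ$)--($3^\circ$) are designed to resolve.
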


\begin{proof}
Suppose first that ${\rm col}(G)=2$. For $\Delta(G)=2$, the statement of the theorem is true by Theorem~\ref{drzewa},
while for $\Delta(G)\geq 3$ it follows from Theorem~\ref{drzewaNEW}.

So we may assume that ${\rm col}(G)\geq 3$.
Fix any Abelian group $\gr$ of order $|\gr|\geq 2(\Delta(G)+{\rm col}(G))-5$.
Let $v_1,v_2,\ldots,v_n$ be the ordering of $V(G)$ witnessing the value of ${\rm col}(G)$.
We will 
label the edges of $G$ with elements of $\gr$ in $n-1$ stages, each corresponding to a consecutive vertex from among $v_2,v_3,\ldots,v_n$.
Initially no edge is labeled. 
Then at each stage $i$, $i=2,3,\ldots,n$, 
we label all \emph{backward edges} of $v_i$, i.e. every edge $v_jv_i\in E$ with $j<i$; such a vertex $v_j$ is called a \emph{backward neighbor} of $v_i$.
We will choose labels avoiding (most of the) sum conflicts between already analyzed vertices and so that at all times the partial edge coloring obtained is proper.
To this end we will make sure that at the end of every stage $i$, the conditions ($1^\circ$)-($3^\circ$) below hold. 
Let $I_i$ denote the set of indices $j$ of all vertices $v_j$ in $\{v_1,v_2,\ldots,v_i\}$ each of which has a neighbor $v_k$ of degree $1$ in $G$ with $k>i$ (note that for any $i$, if an index $j\in\{1,\ldots,i\}$ does not belong to $I_i$, then $j$ does not belong to any set $I_t$ with $t\geq i$). By $w(v_t)$, for each $t$ in $\{1,\ldots,n\}$, we mean the contemporary sum at $v_t$ (with unlabeled edges contributing $0$ to such a sum): 
\begin{itemize}
\item[($1^\circ$)] adjacent edges must be labeled differently;
\item[($2^\circ$)] for every 
$j\in I_i$ such that $v_j$ has a neighbor in $\{v_1,\ldots,v_i\}$:
$w(v_j)\neq 0$;
\item[($3^\circ$)]  for every edge $v_jv_k\in E(G)$ such that 
$j,k\notin I_i$, $j<k\leq i$
and 
$v_k$ has at least $2$ neighbors in $\{v_1,\ldots,v_i\}$ or $v_j$ has a neighbor in $\{v_{k+1},\ldots,v_i\}$: 
$w(v_j)\neq w(v_k)$.
\end{itemize}
Note that if we are able to assure ($1^\circ$)-($3^\circ$) to hold after every stage, then
the edge coloring of $G$ obtained  at the end of our construction will be proper (by ($1^\circ$)), and moreover the neighbors will be distinguished by their corresponding sums, as desired. To see the latter of these, i.e. that $w(v_j)\neq w(v_k)$ for every edge $v_jv_k\in E(G)$ with $1\leq j<k\leq n$, consider first the case when ${\rm deg}(v_k)\geq 2$. Then the fact that $w(v_j)\neq w(v_k)$ follows directly from ($3^\circ$), as  $I_i=\emptyset$ by definition for $i=n$. Assume thus that ${\rm deg}(v_k)=1$, and hence ${\rm deg}(v_j)\geq 2$. Denote by $v_t$ the neighbor of $v_j$ in $G$ with the largest index $t$ (hence $t\geq k$).
Then if $t>k$, by ($3^\circ$) we must have had $w(v_j)\neq w(v_k)$ after stage $t$ and this could not change in the further part of the construction (as no other unlabeled edges incident with $v_j$ or $v_k$ are left after stage $t$).
If finally $t=k$, by ($2^\circ$) we must have had $w(v_j)\neq 0=w(v_k)$ after stage $k-1$ and the inequality $w(v_j)\neq w(v_k)$ could not be violated regardless of the choice of the label for $v_jv_k$ in the following stage (nor in any further ones).
In order to prove the theorem it is thus indeed sufficient to show that we are able to satisfy ($1^\circ$)-($3^\circ$) after every stage using labels in $\gr$.

So assume we are about to perform step $i$ of the construction for some $i\in\{2,\ldots,n\}$ and thus far all our requirements have been fulfilled.
Let $v_{i_1},v_{i_2},\ldots,v_{i_b}$ be all backward neighbors of $v_i$ (hence $b\leq {\rm col}(G)-1$), and set $b=0$ if there are none. 
If $b>0$, subsequently for $j=1,2,\ldots,b-1$ (if $b>1$) we choose weights for $v_{i_j}v_i$ consistently with our rules.
Thus by ($1^\circ$) we cannot use at most $(\Delta(G)-1)+(j-1)\leq \Delta(G)+{\rm col}(G)-4$ labels of already colored edges adjacent with $v_{i_j}v_i$ to label it. If $i_j\in I_i$, in order to obey ($2^\circ$) we cannot use at most one more label for $v_{i_j}v_i$ so that $w(v_{i_j})\neq 0$ afterwards,
while if $i_j\notin I_i$ we will choose the label for $v_{i_j}v_i$ so that afterwards the sum at $v_{i_j}$ is distinct from the sums of all its neighbors except possibly $v_i$ -- this blocks at most $\Delta(G)-1$ additional labels for $v_{i_j}v_i$. As we have thus altogether at most
$\Delta(G)+{\rm col}(G)-4+\Delta(G)-1$ forbidden labels, we are left with at least ${\rm col}(G)\geq 3$ available options in $\gr$ to label $v_{i_j}v_i$.
We choose any of these, except for the case when $j=b-1$, when we make the choice so that $w(v_{i_b})\neq w(v_i)$ afterwards (note that though the choice of label for $v_{i_{b-1}}v_i$ is performed before the one for $v_{i_b}v_i$, the latter one counts in the the sums of both, $v_{i_b}$ and $v_i$, and thus the label of $v_{i_b}v_i$ does not influence the distinction of $w(v_{i_b})$ from $w(v_i)$).
Next, for $v_{i_b}v_i$ we analogously as above cannot use at most $\Delta(G)+{\rm col}(G)-3$ labels by ($1^\circ$).
Moreover, again at most $1$ additional label might be blocked for $v_{i_b}v_i$ by ($2^\circ$) if $i_b\in I_i$ or at most $\Delta(G)-1$ labels otherwise -- so that the obtained sum at $v_{i_b}$ is distinct from the sums of its neighbors except possibly $v_i$.
Similarly at most $1$ additional label might be blocked for $v_{i_b}v_i$ by ($2^\circ$) if $i\in I_i$ or at most ${\rm col}(G)-2$ labels otherwise -- so that the obtained sum at $v_i$ is distinct from the sums of its backward neighbors except possibly $v_{i_b}$.
Altogether at most $(\Delta(G)+{\rm col}(G)-3)+(\Delta(G)-1)+({\rm col}(G)-2)<|\gr|$ labels are thus forbidden, and hence we have at least one label available for $v_{i_b}v_i$.

By such a construction it is clear that conditions ($1^\circ$) and ($2^\circ$) hold after stage $i$.
As for condition ($3^\circ$) it is also straightforward that it holds
for every edge $v_jv_k\in E(G)$ such that $j,k\notin I_i$, $j<k\leq i$ except possibly $v_{i_b}v_i$, for which we were admitting a possible conflict $w(v_{i_b})=w(v_i)$. By our construction this however could only happen if $b=1$, as for $b\geq 2$ we prevented this by our choice of the label of $v_{i_{b-1}}v_i$. Then however $v_{i_b}v_i$ does not meet the assumptions of ($3^\circ$) after stage $i$, so we need not have $w(v_{i_b})\neq w(v_i)$ according to our rules.

As discussed earlier, after step $n$ of the construction we obtain a desired edge labeling of $G$.
\end{proof}

\section{Concluding remarks}\label{SectionOurConcludingRemarks}

By Theorem~\ref{UpperBoundTh} we in particular obtain that the following is true.
\begin{mycorollary}
If $G$ is a connected planar graph of order at least $3$ then $\chi'_g(G)\leq 2\Delta(G)+7$.
\end{mycorollary}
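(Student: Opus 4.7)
The plan is to simply combine Theorem~\ref{UpperBoundTh} with the classical bound on the coloring number of planar graphs. Since every subgraph of a planar graph is planar, and every planar graph contains a vertex of degree at most $5$ by Euler's formula, the degeneracy of any planar graph is at most $5$. Hence ${\rm col}(G)\leq 6$ for every planar graph $G$.

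First I would invoke Theorem~\ref{UpperBoundTh} to obtain $\chi'_g(G)\leq 2(\Delta(G)+{\rm col}(G))-5$. Then I would substitute the inequality ${\rm col}(G)\leq 6$, which yields
$$\chi'_g(G)\leq 2(\Delta(G)+6)-5 = 2\Delta(G)+7,$$
as required.

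There is no real obstacle here: the corollary is a direct plug-in, and the only nontrivial ingredient (the $5$-degeneracy of planar graphs) is a standard textbook fact already alluded to in the discussion preceding Theorem~\ref{UpperBoundTh}.
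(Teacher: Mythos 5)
Your proof is correct and is exactly the argument the paper intends: the corollary is stated as an immediate consequence of Theorem~\ref{UpperBoundTh} together with the standard fact (already mentioned in the paper's introduction) that ${\rm col}(G)\leq 6$ for planar graphs, giving $2(\Delta(G)+6)-5=2\Delta(G)+7$. No differences to report.
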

We believe however that a stronger upper bound should hold even for all graphs,
and we pose 
the following conjecture.

\begin{myconjecture}
If $G$ is a connected graph of order at least $3$ then $\chi'_g(G)\leq \Delta(G)+3$.
\end{myconjecture}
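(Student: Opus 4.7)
The plan is to split on the coloring number. When ${\rm col}(G)=2$ the graph $G$ must be a tree (since $G$ is connected and admits an ordering in which each vertex has at most one predecessor), so the bound follows immediately from Theorem~\ref{drzewaNEW} (for $\Delta(G)\geq 3$, noting $\Delta(G)+3\leq 2\Delta(G)-1$) and from Theorem~\ref{drzewa} (for $\Delta(G)=2$, where every group of order $3$ is $\zet_3$). The substantive work lies in the case ${\rm col}(G)\geq 3$, where I would fix any Abelian group $\gr$ of order at least $2(\Delta(G)+{\rm col}(G))-5$, an ordering $v_1,\ldots,v_n$ of $V(G)$ witnessing ${\rm col}(G)$ (so each $v_i$ has at most ${\rm col}(G)-1$ predecessors), and label edges in $n-1$ stages: at stage $i$, label every backward edge $v_jv_i$ with $j<i$.

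After each stage $i$ I would maintain three invariants: (1) the partial edge coloring is proper; (2) for any already-processed $v_j$ that still has an unprocessed leaf neighbor $v_k$ with $k>i$, the current sum $w(v_j)$ is nonzero, so that at the stage of $v_k$ one label assignment painlessly yields $w(v_j)\neq w(v_k)=f(v_jv_k)$; and (3) for every edge $v_jv_k$ with $j<k\leq i$ whose endpoints' relevant neighborhoods have all been processed by stage $i$, the sums $w(v_j)$ and $w(v_k)$ already differ. An auxiliary index set $I_i$ of indices of vertices with pending leaf neighbors would be used to formalize clause~(2). Once the invariants hold at stage $n$ (when $I_n=\emptyset$), a short check verifies that the full labeling is a proper edge coloring distinguishing neighbors by their sums.

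At stage $i$, with backward neighbors $v_{i_1},\ldots,v_{i_b}$ of $v_i$ (so $b\leq{\rm col}(G)-1$), I would label $v_{i_j}v_i$ in order of increasing $j$. For $j<b$, propriety forbids at most $(\Delta(G)-1)+(j-1)\leq \Delta(G)+{\rm col}(G)-4$ labels and the invariant at $v_{i_j}$ forbids at most $\Delta(G)-1$ more, leaving at least ${\rm col}(G)\geq 3$ admissible labels in $\gr$. For the final edge $v_{i_b}v_i$ both endpoints contribute simultaneously: propriety forbids at most $\Delta(G)+{\rm col}(G)-3$, $v_{i_b}$'s invariant at most $\Delta(G)-1$, and $v_i$'s invariant at most ${\rm col}(G)-2$, a total of $2(\Delta(G)+{\rm col}(G))-6<|\gr|$ forbidden labels.

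The main subtlety is ensuring $w(v_{i_b})\neq w(v_i)$ when this is required after stage $i$. When $b\geq 2$ I would presciently choose the label of $v_{i_{b-1}}v_i$ so that the sums at $v_{i_b}$ and $v_i$ already differ before the final label is placed; this is possible because the label subsequently assigned to $v_{i_b}v_i$ contributes equally to both sums and hence cannot affect their difference. When $b=1$ the edge $v_{i_b}v_i$ does not yet fall under the hypothesis of invariant~(3), so any resulting equality of sums is temporarily tolerated, to be resolved at later stages when additional edges incident with $v_i$ or $v_{i_b}$ are labeled. The hard part of the bookkeeping will be formulating invariant~(2) precisely so that it costs only a single forbidden label in the final-edge count (rather than $\Delta(G)-1$), which is what makes the inequality $2(\Delta(G)+{\rm col}(G))-6<2(\Delta(G)+{\rm col}(G))-5=|\gr|$ tight enough to close the argument.
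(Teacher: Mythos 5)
There is a fundamental mismatch between what you set out to prove and what your argument actually delivers. The statement in question is the paper's concluding \emph{conjecture}, which the paper does not prove; the authors only establish it for trees (Theorem~\ref{drzewaNEW}) and note via Observation~\ref{sharp} that $\Delta(G)+3$ would be tight. Your argument is, essentially line for line, the paper's proof of Theorem~\ref{UpperBoundTh}, and it proves exactly that weaker statement: $\chi'_g(G)\leq 2(\Delta(G)+{\rm col}(G))-5$. You fix a group of order at least $2(\Delta(G)+{\rm col}(G))-5$ and your counting yields $2(\Delta(G)+{\rm col}(G))-6<|\gr|$ forbidden labels; nothing in the construction lets you shrink the group to order $\Delta(G)+3$. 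For any connected $G$ with ${\rm col}(G)\geq 3$ and $\Delta(G)\geq 3$ one has $2(\Delta(G)+{\rm col}(G))-5\geq 2\Delta(G)+1>\Delta(G)+3$, so the conjecture is simply not addressed outside the tree case.

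The gap is not a bookkeeping slip but is intrinsic to the greedy approach: when you label the last backward edge $v_{i_b}v_i$, propriety alone can forbid up to $\Delta(G)+{\rm col}(G)-3$ labels, and distinguishing $w(v_{i_b})$ from the sums of its up to $\Delta(G)-1$ other neighbors forbids up to $\Delta(G)-1$ more, which already exceeds $\Delta(G)+3$ whenever ${\rm col}(G)\geq 3$ and $\Delta(G)\geq 3$. Closing the gap to $\Delta(G)+3$ would require a genuinely different (non-greedy, or at least non-worst-case-counting) idea, which is precisely why the paper leaves the statement as a conjecture. Your treatment of the ${\rm col}(G)=2$ (tree) base case via Theorems~\ref{drzewa} and~\ref{drzewaNEW} is fine, though the parenthetical comparison $\Delta(G)+3\leq 2\Delta(G)-1$ is irrelevant there: for trees Theorem~\ref{drzewaNEW} already gives the conjectured bound directly.
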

By Observation~\ref{sharp} this could not be improved.
In this context it would also be interesting to settle for which trees $T$ we actually have the equality $\chi'_g(T) = \Delta(T)+3$.

At the end  notify that so far  only for only one family of trees (namely    $(3^{p}-2)$-regular trees with $p\geq2$) we can conclude  that there exists a $\gr$-twin edge coloring for any Abelian group $\gr$ of order $k\geq \chi'_g(G)$. Recall that the fact that $\chi'_g(G)\leq K$ for a given graph $G$ and a constant $K$ does not guarantee that for every group $\gr$ of order $k>K$ there exists a $\gr$-twin edge coloring  of  $G$.  We may  almost guarantee this in the case of trees though. Note that for an Abelian group $\gr$, $|\gr|\geq10$,  with at most one involution  we can improve Lemma~\ref{trzyzero} so that $x,y$ are additionally nonzero. Moreover for $k\geq3$ and $n_1\geq3$ in Lemma~\ref{involutions2} we can require that also $0\notin A_1$. Hence using a similar method as in the proofs of Lemmas~\ref{treeodd}, \ref{inwolucja} and~\ref{inwolucje} one can show that for any Abelian group $\gr$ of order $k>\Delta(T)+3$. 
With a bit of extra effort we thus could obtain upper bounds of similar flavor as the ones in Theorem~\ref{drzewaNEW} for all forests.
These we can however derive effortlessly from our results for the case of labelings with cyclic groups $\zet_k$.
Thus we conclude our paper 
with Theorem~\ref{ConcludingMytheorem} below, containing exactly these upper bounds 
for the twin chromatic index of forests.

One can easily see that a path of order at least $3$ has a $\zet_k$-twin edge labeling for any odd $k\geq3$. Then by Theorem~\ref{drzewa} and Lemma~\ref{treeodd}  we directly obtain the following.
\begin{mytheorem}\label{ConcludingMytheorem}
Let $G$ be a forest containing no isolated edges. Then $\chi'_t(G)\leq \Delta(G) +2$ if $\Delta(G)$ is odd and $\chi'_t(G)\leq \Delta(G) +3$ otherwise.
\end{mytheorem}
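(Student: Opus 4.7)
The plan is to reduce the statement to a component-wise construction. Since $G$ is a forest with no isolated edges, every connected component of $G$ is either an isolated vertex (with no edges to label) or a tree of order at least $3$. I would set $k=\Delta(G)+2$ when $\Delta(G)$ is odd and $k=\Delta(G)+3$ otherwise, so that $k$ is always odd. It then suffices to produce, for each non-trivial tree component $T$, a $\zet_k$-twin edge coloring; the union of these colorings is a $\zet_k$-twin edge coloring of $G$, witnessing $\chi'_t(G)\leq k$.

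First I would handle the main case $k\geq 7$. For every non-trivial tree component $T$ with $t:=\Delta(T)\leq\Delta(G)$ we have $k\geq\max\{7,t+2\}$, so the hypothesis of Lemma~\ref{treeodd} is satisfied with $\gr=\zet_k$. The exceptional case of Lemma~\ref{treeodd} requires $\gr\cong(\zet_3)^p$, which cannot occur here: the cyclic group $\zet_k$ with $k\geq 7$ is never isomorphic to $(\zet_3)^p$, as this would force $p=1$ and $k=3$. Thus Lemma~\ref{treeodd} supplies the required $\zet_k$-twin edge coloring of $T$.

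It remains to dispose of the residual values $k\in\{3,5\}$. The equality $k=3$ forces $\Delta(G)=1$, so $G$ consists of isolated edges (excluded by hypothesis) and isolated vertices, and hence $G$ carries no edges at all. For $k=5$ we have $\Delta(G)\in\{2,3\}$, so every non-trivial tree component $T$ satisfies $\Delta(T)\in\{2,3\}$. If $\Delta(T)=3$ then Theorem~\ref{drzewa} directly yields a $\zet_5$-twin edge coloring of $T$ since $\Delta(T)+2=5$; if $\Delta(T)=2$, then $T$ is a path of order at least $3$, and the observation stated just before the theorem (that such a path admits a $\zet_k$-twin edge labeling for every odd $k\geq 3$) delivers the required $\zet_5$-twin edge coloring.

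There is no serious obstacle in this argument. The only points that warrant care are verifying that the small values of $k$ not covered by the hypothesis $k\geq 7$ of Lemma~\ref{treeodd} are absorbed by Theorem~\ref{drzewa} together with the observation about paths, and observing that the $(\zet_3)^p$ exception of Lemma~\ref{treeodd} never materialises for cyclic groups $\zet_k$ of order at least $7$. Combining all the above cases across every component of $G$ then yields the claimed bound on $\chi'_t(G)$.
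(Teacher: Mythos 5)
Your proposal is correct and follows essentially the same route as the paper: the paper's own (very terse) proof likewise combines the path observation, Theorem~\ref{drzewa} for the small odd orders not covered by Lemma~\ref{treeodd}, and Lemma~\ref{treeodd} with $\gr=\zet_k$ for odd $k\geq 7$, noting implicitly that the $(\zet_3)^p$ exception cannot arise for a cyclic group of order at least $7$. You have merely made explicit the component-wise reduction and the case analysis that the paper leaves to the reader.
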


\nocite{*}
\bibliographystyle{alpha}
\bibliography{twin_dmtcs}
\label{sec:biblio}

\end{document}